\documentclass[11pt]{amsart}

\usepackage{amssymb}

\usepackage[bookmarks,colorlinks,citecolor=red,linkcolor=blue]{hyperref}
\usepackage{tikz}

\newtheorem{theorem}{Theorem}[section]
\newtheorem{lemma}[theorem]{Lemma}
\newtheorem{proposition}[theorem]{Proposition}
\newtheorem{corollary}[theorem]{Corollary}

\theoremstyle{definition}

\newtheorem{definition}[theorem]{Definition}

\theoremstyle{remark}
\newtheorem{remark}[theorem]{Remark}

\newcommand{\haus}{\mathcal{H}^1}

\title[Ahlfors regularity of continuum]{Ahlfors regularity of continua that minimize maxitive set functions}
\author{Davide Zucco}
\address{Dipartimento di Matematica ``G. Peano'', Università di Torino, Via Carlo Alberto 10, 10123 Torino, Italy}
\email{davide.zucco@unito.it}

\begin{document}

\begin{abstract}
The primary objective of this paper is to establish the \emph{Ahlfors regularity} of minimizers of set functions that satisfy a suitable \emph{maxitive} condition on disjoint unions of sets. Our analysis focuses on minimizers within continua of the plane with finite 1-dimensional Hausdorff measure. Through quantitative estimates, we prove that the length of a minimizer inside the ball centered at one of its points is comparable to the radius of the ball. 

By operating within an abstract framework, we are able to encompass a diverse range of entities, including the \emph{inradius of a set}, the \emph{maximum of the torsion function}, and \emph{spectral functionals} defined in terms of the eigenvalues of elliptic operators. These entities are of interest for several applications, such as structural engineering, urban planning, and quantum mechanics.
\end{abstract}

\maketitle
{\small

\noindent {\textbf{Keywords:} shape optimization, Ahlfors regularity, maxitive set functions}

\medskip
\noindent{\textbf{MSC 2020:} 28A10; 28E10; 35P05; 49Q10; 60J70
}
}

\section{Introduction}

A set function $F$ is \emph{maxitive} on a suitable class of sets if 
on every disjoint union of sets $A_1$ and $A_2$ of this class there holds
\begin{equation}\label{global}
F(A_1\cup A_2)=\max \{F(A_1),F(A_2)\}.
\end{equation}
This maxitivity condition is not uncommon in the literature and is notable for its application in Probability Theory as an alternative to the additivity property of a measure. This has led to the emergence of a new and significant field of research known as Possibility Theory (for an overview on this subject, see, for instance, \cite{dubpra} and \cite{pon}). In this context, $F$ aims to measure \emph{the possibility} of an event and thus $F$ is required to be \emph{maxitive on a $\sigma$-algebra of sets}, a typical class used for modeling the events. 

However, in this paper, we address a set function $F$ defined solely on \emph{open sets} (thus within a space that is not closed under complement) and further, which is \emph{maxitive on open sets}. This means that the value of $F$ for each open set equals the maximum of the values over its connected components. Our approach is abstract, allowing us to embrace a diverse array of intriguing set functions. Indeed within this framework, we can incorporate various classical entities, such as the \emph{inradius} $R$ of an open set $A$ 
$$R(A):=\max_{x\in A} \mathrm{d}(x,\partial A)$$
\emph{the maximum $M$ of the torsion function} $w_A$
\[
M(A):= \max_{x\in A} w_A(x),
\]
and \emph{spectral functionals} given by the eigenvalues $\lambda_j$ of an elliptic operator
\[
f\big(\lambda_1(A),\lambda_2(A),\dots, \lambda_k(A)\big),
\]
with $f\colon \mathbb R^{k}\to \mathbb R$ decreasing in each variable. These entities are of interest for several applications, such as structural engineering, urban planning, and quantum mechanics.

In such applications, primarily grounded in minimum principles, it is sometimes advantageous to seek the best open sets $A$ that minimize $F$. Without additional constraints, the minimization problem becomes trivial because the open set $A$ can simply be taken as $\emptyset$: indeed due to the monotonicity of $F$ with respect to set inclusion (recall that, like additive measures, maxitive set functions are monotone), $\emptyset$ is an obvious minimizer for $F$. Therefore, to ensure the existence of non-trivial solutions, it is necessary to prevent the convergence of minimizing sequences to $\emptyset$. A common approach, often motivated by the applications (see, e.g., \cite{buoust,chetep, DMT, mipast, paoste, tilzuc, tilzuc2, zuc}) is to work with functions defined solely on open sets of the form $\Omega\setminus\Sigma$ for some fixed open set $\Omega$ and some closed set $\Sigma$. Specifically, one first fixes $\Omega$ and then studies the set function $F$ by varying \emph{closed sets} within a suitable class that prevents the convergence of a sequence of $\Sigma$ to $\overline\Omega$ (which would prevent the convergence of the corresponding open sequence $\Omega\setminus \Sigma$ to the empty set). In two dimensions, this can be achieved by imposing a uniform bound on the number of connected components of $\Sigma$ together with an upper bound on its length (see for instance \cite[Corollary~3.3]{DMT}). For simplicity in presentation, we only focus on \emph{connected} sets in the plane (not on multi-connected ones) and therefore, the prototype of an admissible set $\Sigma$ must be a curve or a connected system of curves, with a given total length. Other generalizations are clearly possible, as one may also consider curve-type $\Sigma$ in higher-dimensions (but we do not pursue this technicality in this paper).

For a given open bounded connected set $\Omega\subset \mathbb R^2$ and a real number $L>0$ we focus on existence and regularity results for the following shape optimization problem: 
\begin{equation}\label{prob}
\min\{ F(\Omega\setminus \Sigma): \text{$\Sigma\subset \overline \Omega$ closed, connected, and $\haus(\Sigma)=L$}\},
\end{equation}
where $F\colon \mathcal A(\Omega)\to [0,+\infty]$ is a set function defined on the open subset of $\Omega$. The existence of a minimizer is quite standard and follows the direct methods of the Calculus of Variations. One must use the Hausdorff topology (that we recall in Section~\ref{sec.sets}) and combine the (very natural) hypothesis that $F$ is lower-semicontinuous with that it is monotone (see Section~\ref{sec.func} for definitions). 

\begin{theorem}[Existence]\label{teo:ex}
Let $F\colon \mathcal A(\Omega)\to [0,+\infty]$ be a set function that is 
\begin{itemize}
\item[(i)] lower semicontinuous w.r.to the Hausdorff convergence of open sets;
\item[(ii)] monotone w.r.to set inclusion;
\end{itemize}
i.e., $F$ satisfies \eqref{eq.ls} and \eqref{eq.mo}.
Then there exists a minimizer for problem \eqref{prob}.
\end{theorem}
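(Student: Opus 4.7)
The plan is to apply the direct method of the Calculus of Variations. Three ingredients are needed: compactness of minimizing sequences in the Hausdorff topology, preservation of the admissibility constraints under this convergence, and lower semicontinuity of $F$, which is given by hypothesis (i). I would begin with a minimizing sequence $\Sigma_n \subset \overline{\Omega}$ of closed connected sets with $\haus(\Sigma_n) = L$. Since $\overline{\Omega}$ is compact, the Blaschke selection theorem gives a (non-relabeled) subsequence converging in the Hausdorff distance to some closed $\Sigma \subset \overline{\Omega}$.

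Next I would recover the admissibility of $\Sigma$. Connectedness passes to the Hausdorff limit by the classical result for continua in a compact space, so $\Sigma$ is a continuum. The length constraint is more delicate: Go\l{}\k{a}b's theorem guarantees only the lower-semicontinuity inequality $\haus(\Sigma) \le \liminf_n \haus(\Sigma_n) = L$, which may in principle be strict. For the functional side, the Hausdorff convergence $\Sigma_n \to \Sigma$ inside the fixed $\overline{\Omega}$ translates into the Hausdorff convergence of the open complements $\Omega \setminus \Sigma_n \to \Omega \setminus \Sigma$ (as made precise in Section~\ref{sec.sets}), so hypothesis (i) yields
\[
F(\Omega \setminus \Sigma) \le \liminf_n F(\Omega \setminus \Sigma_n) = \inf \eqref{prob}.
\]

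The main obstacle is therefore the possible strict inequality $\haus(\Sigma) < L$, which would make $\Sigma$ inadmissible. Here monotonicity (ii) is the key: I would enlarge $\Sigma$ to $\widetilde{\Sigma}$ by attaching a simple arc inside $\overline{\Omega}$ of length $L - \haus(\Sigma)$ meeting $\Sigma$ at a single endpoint. Then $\widetilde{\Sigma}$ is closed, connected, contained in $\overline{\Omega}$, and has $\haus(\widetilde{\Sigma}) = L$. Since $\Omega \setminus \widetilde{\Sigma} \subset \Omega \setminus \Sigma$, hypothesis (ii) gives
\[
F(\Omega \setminus \widetilde{\Sigma}) \le F(\Omega \setminus \Sigma) \le \inf \eqref{prob},
\]
so $\widetilde{\Sigma}$ is a minimizer. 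The delicate sub-point is the feasibility of drawing such an arc: if the admissible class in \eqref{prob} is non-empty (otherwise the statement is vacuous), then $\overline{\Omega}$ already contains a continuum of length $L$, and a portion of such a continuum can be glued to $\Sigma$ to produce the desired $\widetilde{\Sigma}$. I expect verifying this attachment cleanly, together with the translation of Hausdorff convergence from closed sets to their open complements, to be the only non-routine points of the argument.
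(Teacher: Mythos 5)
Your proposal is correct and follows essentially the same route as the paper: Blaschke compactness, Go\l ab's theorem for connectedness and $\haus(\Sigma)\le L$, lower semicontinuity of $F$ on the open complements, and then an extension of $\Sigma$ to length $L$ combined with monotonicity. The only packaging difference is that the paper delegates your ``delicate sub-point'' (attaching extra length inside $\overline\Omega$) to Lemma~\ref{lemma2}, which constructs the extension explicitly by adding segments inside a connected component of $\Omega\setminus\Sigma$.
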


The regularity of a minimizer (in this quite general framework) is instead less obvious and is the principal target of this study. 
Under suitable assumptions on $F$ (that we will discuss in a moment) one can prove the so-called \emph{Ahlfors regularity}, a weak notion of regularity, which is usually defined in terms of two density estimates through the ball $B_r(x)$ with center $x$ and radius $r$: precisely it states that the total length of a minimizer inside the ball centered at one of its points is comparable to the radius of the ball. 
\begin{definition}[Ahlfors regularity]\label{alfhors}
A set $\Sigma\subset \mathbb R^2$ is said \emph{Ahlfors regular} if there exist two positive constants $c_1,c_2>0$ and a radius $r_0>0$ such that for every positive $r<r_0$ there hold the estimates
\begin{equation}\label{ahl}
c_1r\leq \haus(\Sigma\cap B_r(x))\leq c_2r, \qquad \text{for every $x\in\Sigma$},
\end{equation} 
where $B_r(x)$ denotes the ball with center $x$ and radius $r$.
\end{definition}
In other words a set is Ahlfors regular if its contribution (in terms of the $1$-dimensional Hausdorff measure) inside a ball is uniformly of the same order of the radius of the ball. Quantitatively this says that the total length of the set inside the ball centered at one of its points of radius $r$ is no more than $c_2$ times the radius of the ball and no less than $c_1$ times the radius of the ball. When the set is \emph{connected} (e.g., it is a continuous curve) the lower bound is always satisfied with $c_1=1$ (see \cite[Theorem~4.4.5]{ambtil}) (equality is achieved when $\Sigma\cap B_r(x)$ is a line segment with one endpoint at $x$). Moreover, when the set is \emph{regular} in a classical sense (e.g., it is a simple Lipschitz curve), the upper bound is also satisfied with $c_2$ equals approximately $2$. At a $k$-node of a curve (i.e., a point where the curve exits with $k$ regular branches), the upper bound remains valid but with a constant $c_2\approx k$. Essentially, the constant $c_2$ accounts for the maximum number $k$ of $k$-nodes of a curve. An example of a set that is not Ahlfors regular at one of its point is the following. For $k\in\mathbb N$ let $s_k$ (in polar coordinate) be the radius of length $1/(2^k)$ of angle $\pi/(k+1)$ of the ball centered at the origin. The countable union $S$ of all these radii $s_k$ is a continuum that is rectifiable but not Ahlfors regular, see Figure~\ref{fig.alf}. Indeed $\haus(S)=2$ yields the rectifiability of $S$. Moreover, since the function $h(r):=\haus(S\cap B_r(0))$ is decreasing, the limit of $h$ as $r\to 0^+$ exists and it suffices to compute it along a subsequence. If $r=1/2^{n}$ then $h(r)=(2+n)/2^n=(2+n)r$, and this excludes the existence of a constant $c_2$ bounding $(2+n)$ for all $n\in\mathbb N$. A more refined rectifiable continuum that is not Ahlfors regular in any of its points has been recently constructed in 
\cite[Theorem~1.1]{gomc} by unions of approximations of snowflake-like sets. 

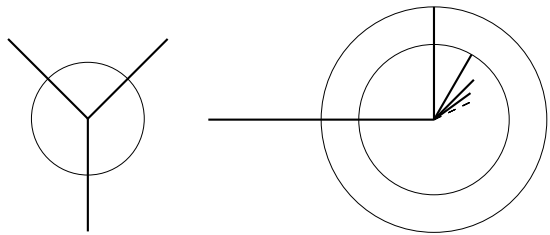
\begin{figure}
\begin{center}
\begin{tikzpicture}[scale=1.5]

    \fill (0,0) circle (0.01);

    \draw[-,thick] (0,0) -- ({cos(270)},{sin(270)});
    \draw[-, thick] (0,0) -- ({cos(135)},{sin(135)});
    \draw[-, thick] (0,0) -- ({cos(45)},{sin(45)});
    \draw[very thin] (0,0) circle (0.5);

\end{tikzpicture}
\quad
\begin{tikzpicture}[scale=3]

    \draw[very thin] (0,0) circle (1/2);
    \draw[very thin] (0,0) circle (1/3);
    \foreach \k in {1,2,3,4,5} {
        \draw[-, thick] (0,0) -- ({cos(180/\k)*1/\k}, {sin(180/\k)*1/\k});
        \draw[dashed, -] (0,0) -- ({cos(180/7)*1/5}, {sin(180/7)*1/5});
    }
\end{tikzpicture}
\caption{Two rectifiable continua, the left is Ahlfors regular, the right is not.}\label{fig.alf}
\end{center}
\end{figure}

Despite its simplicity, Ahlfors regularity is already a first step towards achieving more regularity results; note for instance that it implies the so-called uniform rectifiability, the uniform concentration property, some type of Carleson-measure estimates, (see   \cite[Chapter C]{dav} and \cite{davsem} for further details). Moreover, Ahlfors regularity, together with a blow-up analysis, could be used to derive information on the local geometry of the minimizers (in the spirit of \cite{santil}). We plan to investigate this topics in a future work.

To state the main result of the paper the monotonicity condition in Theorem~\ref{teo:ex} has to be refined in two directions. Firstly, one needs $F$ to be \emph{strictly monotone} as all connected components of a set shrink. Moreover, one 
requires $F$ to be \emph{locally} maxitive, which roughly consists in supposing that the value of $F$ along the union of a set with a small ball equals the values of the set (see Section~\ref{sec.func} for precise definitions). 
\begin{theorem}[Regularity]\label{teo:reg}
Let \!$F\colon\! \mathcal A(\Omega)\to\! [0,+\infty]$ be a set function that is 
\begin{itemize}
\item[(i)] lower semicontinuous w.r.to Hausdorff convergence of open sets;
\item[(ii)] strictly monotone on enlargements; 
\item[(iii)] locally maxitive;
\end{itemize}
i.e., $F$ satisfies conditions \eqref{eq.ls}, \eqref{eq.sm}, and \eqref{eq.wm}.
Then every minimizer $\Sigma$ of problem \eqref{prob} is Ahlfors regular: there exists a radius $r_0>0$, such that for every $r<r_0$ it holds
\begin{equation}\label{astima}
r\leq\haus(\Sigma\cap B_r(x))\leq 2\pi r, \qquad \text{for any $x\in \Sigma$},
\end{equation}
where $B_r(x)$ denotes the ball with center $x$ and radius $r$.
\end{theorem}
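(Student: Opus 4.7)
My plan is to treat the two inequalities in \eqref{astima} separately. The lower bound $r\leq \haus(\Sigma\cap B_r(x))$ is essentially independent of the functional $F$: since a minimizer $\Sigma$ is a continuum with $\haus(\Sigma)=L$, its diameter is bounded below by a positive constant depending only on $L$, so for every $x\in\Sigma$ and every $r$ smaller than $\mathrm{diam}(\Sigma)/2$ the connected component of $\Sigma\cap\overline{B_r(x)}$ containing $x$ is forced to meet $\partial B_r(x)$; the classical length estimate for connected sets cited from \cite[Theorem~4.4.5]{ambtil} then delivers $\haus(\Sigma\cap B_r(x))\geq r$, with $c_1=1$.

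For the upper bound I would argue by contradiction via a competitor construction. Fix $x\in\Sigma$ and choose $r$ so small that $\overline{B_r(x)}\subset\Omega$ and $\haus(\Sigma\cap\partial B_r(x))=0$ (possible for almost every $r$, by a coarea-type argument applied to $\mathrm{d}(\cdot,x)$). Assuming $\haus(\Sigma\cap B_r(x))>2\pi r$, define the competitor
\[
\tilde\Sigma:=(\Sigma\setminus B_r(x))\cup\partial B_r(x).
\]
Since $\Sigma$ is connected, each connected component of $\Sigma\setminus B_r(x)$ must cluster on $\partial B_r(x)$, so $\tilde\Sigma$ is connected; its length satisfies $\haus(\tilde\Sigma)=L-\haus(\Sigma\cap B_r(x))+2\pi r<L$. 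Because $\partial B_r(x)\subset\tilde\Sigma$ separates $\tilde U:=\Omega\setminus\tilde\Sigma$ into the two disjoint open pieces $B_r(x)$ and $\tilde U_{\mathrm{out}}:=(\Omega\setminus\overline{B_r(x)})\setminus\Sigma$, maxitivity on this decomposition yields
\[
F(\tilde U)=\max\bigl\{F(B_r(x)),\,F(\tilde U_{\mathrm{out}})\bigr\},
\]
while monotonicity gives $F(\tilde U_{\mathrm{out}})\leq F(\Omega\setminus\Sigma)$, since $\tilde U_{\mathrm{out}}\subset\Omega\setminus\Sigma$.

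To turn this weak inequality into a strict contradiction I would close the length gap $L-\haus(\tilde\Sigma)>0$ by attaching to $\tilde\Sigma$ a short arc $\gamma$ of exactly the missing length, chosen inside whichever component of $\tilde U_{\mathrm{out}}$ realizes the max of $F$. The enriched set $\tilde\Sigma\cup\gamma$ is admissible for \eqref{prob}, and strict monotonicity on enlargements \eqref{eq.sm} yields $F(\Omega\setminus(\tilde\Sigma\cup\gamma))<F(\tilde U_{\mathrm{out}})\leq F(\Omega\setminus\Sigma)$. Provided in addition that $F(B_r(x))\leq F(\Omega\setminus\Sigma)$ for every sufficiently small $r$, maxitivity combines these bounds into $F(\Omega\setminus(\tilde\Sigma\cup\gamma))<F(\Omega\setminus\Sigma)$, contradicting the minimality of $\Sigma$ and proving the upper bound in \eqref{astima}.

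The genuinely delicate step, which I expect to be the principal obstacle, is the control $F(B_r(x))\leq F(\Omega\setminus\Sigma)$ for $r<r_0$: this is precisely where the local maxitivity hypothesis \eqref{eq.wm} must be activated, presumably by encoding that the $F$-value of a shrinking ball cannot exceed the ambient value of $F$ on the minimizer's complement. A secondary technicality, to be handled by obvious modifications, is the case $x\in\partial\Omega$, where the separating curve must be taken as $\partial B_r(x)\cap\Omega$ and the length budget recomputed; the remaining ingredients---connectedness of $\tilde\Sigma$, length accounting, and existence of the auxiliary arc $\gamma$---reduce to routine geometric checks.
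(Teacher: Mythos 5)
Your competitor $\tilde\Sigma=(\Sigma\setminus B_r(x))\cup\partial B_r(x)$ and the length accounting are exactly the paper's construction, and your lower bound is the paper's argument (note only that $\mathrm{diam}(\Sigma)$ is \emph{not} bounded below in terms of $L$ alone, but positivity of the diameter is all you need). However, two steps in your upper-bound argument do not go through as written. First, you invoke global maxitivity to write $F(\tilde U)=\max\{F(B_r(x)),F(\tilde U_{\mathrm{out}})\}$ and then flag the control $F(B_r(x))\leq F(\Omega\setminus\Sigma)$ as the ``principal obstacle.'' Global maxitivity \eqref{eq.ma} is not among the hypotheses; what is assumed is local maxitivity \eqref{eq.wm}, which applied to $A=\Omega\setminus\Sigma$ states \emph{directly} that $F\big(((\Omega\setminus\Sigma)\setminus\overline{B})\cup B_r^\Omega(x)\big)=F\big((\Omega\setminus\Sigma)\setminus\overline{B}\big)$ for all $r<r_{\Omega\setminus\Sigma}$. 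So there is no obstacle and nothing to prove about $F(B_r(x))$: the hypothesis is precisely the identity you need, and combined with monotonicity it gives $F(\Omega\setminus\tilde\Sigma)\leq F(\Omega\setminus\Sigma)$ at once.

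The second and more serious gap is your use of strict monotonicity. You close the length deficit by attaching a single arc $\gamma$ inside one component of $\tilde U_{\mathrm{out}}$ and then apply \eqref{eq.sm} to conclude $F(\Omega\setminus(\tilde\Sigma\cup\gamma))<F(\tilde U_{\mathrm{out}})$. But \eqref{eq.sm} is formulated only for \emph{enlargements}, and an enlargement must by definition strictly shrink \emph{every} connected component of the complement; $\tilde\Sigma\cup\gamma$ with $\gamma$ confined to one component is not an enlargement when $\Omega\setminus\tilde\Sigma$ has several components, which is the generic situation. Moreover \eqref{eq.sm} only guarantees that \emph{some} enlargement strictly decreases $F$, not a prescribed one, so you cannot pick the component ``realizing the max'' and expect strictness there (for the inradius, for instance, adding length in the wrong component changes nothing). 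The paper avoids both issues by invoking Lemma~\ref{lemma2} to produce a genuine enlargement $\Sigma''$ of $\tilde\Sigma$ of length exactly $L$, for which \eqref{eq.sm} supplies the strict inequality $F(\Omega\setminus\Sigma'')<F(\Omega\setminus\tilde\Sigma)\leq F(\Omega\setminus\Sigma)$, contradicting minimality unless $\haus(\tilde\Sigma)\geq L$; the density estimate then follows from the length identity. You should replace your arc construction by this enlargement step (and, as a minor point, the a.e.\ selection of $r$ with $\haus(\Sigma\cap\partial B_r(x))=0$ is unnecessary if you work with $\Sigma\setminus\overline{B_r(x)}$ and subadditivity of $\haus$).
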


In Section~\ref{sec.prob} we prove Theorems~\ref{teo:ex} and \ref{teo:reg}. The concept of the proof of the regularity result is straightforward and relies on a comparison between a set and a new one, defined by replacing the portion of the initial set inside the ball with the boundary of the ball. This primarily builds upon an idea already present in \cite{mipast} for the so-called \emph{maximum distance problem}, that have been deeply studied by several authors (see e.g. \cite{chetep, paoste, tilzuc}). Of course, we can derive the same result in \cite[Theorem~5.1]{mipast} from the previous Theorem~\ref{teo:reg}.

\begin{corollary}[Maximum distance problem]\label{cor.mdp}
There exists a minimizer of
\begin{equation}\label{mdp}
\min\{R(\Omega\setminus \Sigma): \text{$\Sigma\subset \overline \Omega$ closed, connected, and $\haus(\Sigma)=L$}\}
\end{equation}
and every minimizer is Ahlfors regular satisfying \eqref{astima}.
\end{corollary}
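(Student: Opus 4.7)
The plan is to derive the corollary as an immediate consequence of Theorems~\ref{teo:ex} and \ref{teo:reg}, applied to the specific choice $F = R$. The statement then reduces to verifying that the inradius $R\colon\mathcal A(\Omega)\to[0,+\infty]$ satisfies conditions (i)--(iii) of Theorem~\ref{teo:reg}, which also entail the existence hypotheses of Theorem~\ref{teo:ex}.

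The key preliminary observation I would establish is that $R$ is maxitive on open sets. Decomposing an open set $A$ into its connected components $\{A_i\}$ and fixing $x\in A_i$, the straight segment from $x$ to its nearest point of $\partial A$ must cross $\partial A_i$, whence $d(x,\partial A)=d(x,\partial A_i)$; taking the supremum over $x$ gives $R(A)=\max_i R(A_i)$. Lower semicontinuity (i) is then straightforward: if $A_n\to A$ in the Hausdorff sense of open sets, every ball $B_\rho(x)\subset A$ is eventually contained in $A_n$, so $\liminf_n R(A_n)\ge \rho$ for every $\rho<R(A)$. Strict monotonicity on enlargements (ii) follows from the fact that if $A_2$ strictly enlarges $A_1$ near an optimal inscribed-disc center of $A_1$, a disc of strictly larger radius fits inside $A_2$. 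Finally, local maxitivity (iii), as expressed by \eqref{eq.wm}, is checked by combining the global maxitivity of $R$ proved above with the smallness of the ball in the definition, which guarantees it contributes no more to $R$ than $R(A)$ itself.

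The principal obstacle is purely a matter of bookkeeping: reconciling these verifications with the precise quantitative formulations \eqref{eq.sm} and \eqref{eq.wm} from Section~\ref{sec.func}, which are not reproduced in the excerpt. However, since the same reasoning underlies the direct proof in \cite{mipast}, the reconciliation should be routine. Once the three conditions are established, both existence of a minimizer for \eqref{mdp} and the Ahlfors estimates \eqref{astima} follow immediately from Theorems~\ref{teo:ex} and \ref{teo:reg}, with the same constants $c_1=1$ and $c_2=2\pi$.
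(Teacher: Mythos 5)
Your overall strategy coincides with the paper's: both reduce Corollary~\ref{cor.mdp} to Theorems~\ref{teo:ex} and \ref{teo:reg} applied to $F=R$, where $R(\Omega\setminus\Sigma)=\max_{x\in\overline\Omega}\mathrm d(x,\partial\Omega\cup\Sigma)$, and then verify hypotheses (i)--(iii). Your checks of (i) (compact subsets of $A$ are eventually contained in $A_n$) and of (iii) (global maxitivity of $R$ together with the vanishing of $R$ on shrinking balls) are sound; the paper proves (i) via uniform convergence of distance functions and packages (iii) as an application of Proposition~\ref{mimplywm}, but these are essentially the same arguments.

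The genuine gap is in step (ii). Condition \eqref{eq.sm} requires you to \emph{exhibit an enlargement} $\Sigma'\supsetneq\Sigma$ with $R(\Omega\setminus\Sigma')<R(\Omega\setminus\Sigma)$, i.e.\ that a suitable \emph{shrinking} of the open set strictly \emph{decreases} the inradius. Your justification --- ``if $A_2$ strictly enlarges $A_1$ near an optimal inscribed-disc center of $A_1$, a disc of strictly larger radius fits inside $A_2$'' --- argues the opposite implication, and is in any case not correct as stated (adjoining a small bump to an open set adjacent to its maximal inscribed disc need not increase the inradius at all). The actual difficulty, which this sentence hides, is that the maximal inscribed disc need not be unique: if $R(\Omega\setminus\Sigma')=R(\Omega\setminus\Sigma)=:R$, then some $x'$ satisfies $B_R(x')\subset\Omega\setminus\Sigma'$, so the added material $\Sigma'\setminus\Sigma$ must meet \emph{every} maximal inscribed ball of every connected component in order to force a strict decrease (consider a $1\times 100$ rectangle, whose maximal discs sweep along the entire midline and share no common point). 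This is precisely the nontrivial content that the paper handles by citing \cite[Theorem~3.7]{mipast} for step (ii); your sketch does not supply it, so you must either make that citation load-bearing or give an actual construction of the required enlargement.
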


A potential interpretation of problem \eqref{mdp} can be framed in terms of urban planning: in a city (modeled by the set $\Omega$), there may be a need to design a new infrastructure, such as a metro line, an irrigation system, or an internet network (modeled by the set $\Sigma$). The goal is to optimize the layout of this infrastructure $\Sigma$ within $\Omega$, taking into account constraints on its length imposed by cost consideration and practicality of expansion.

The novelty of Theorem~\ref{teo:reg} lies in our generalization to an abstract framework, allowing us to encompass a diverse range of entities unified by the shared criterion of satisfying a suitable \emph{strict monotonicity} condition with a \emph{local maxitivity} property. The interest on these two conditions, shrinking the size of \emph{all} connected components, and not of merely one of them, as well as being locally maxitive, in view of globally maxitive \eqref{global}, is motivated by later applications of the main result Theorem~\ref{teo:reg} to some specific set functions (such as those depending on the eigenvalues of the Dirichlet-Laplacian, which in fact only verify the local version of the maxitivity condition).

\begin{corollary}[Spectral optimization]\label{spectra}
Let $k\in\mathbb N$ and $f\colon \mathbb R^{k}\to \mathbb R$ be a \emph{lower semicontinuous} function that is \emph{strictly decreasing in each variable}, i.e., if $x_j\leq y_j$ for all $j\in\{1,\dots,k\}$ and $x_{j_0}<y_{j_0}$ for at least one $j_0\in\{1,\dots,k\}$ then $f(x_1,x_2,\dots, x_k)> f(y_1,y_2,\dots, y_k)$.
There exists a minimizer of 
\begin{equation*}
\min\{f\big(\lambda_1(\Omega\setminus\Sigma),\dots, \lambda_k(\Omega\setminus\Sigma)\big): \text{$\Sigma\subset \overline \Omega$ closed, connected, and $\haus(\Sigma)=L$}\}
\end{equation*}
and every minimizer is Ahlfors regular satisfying \eqref{astima}.
\end{corollary}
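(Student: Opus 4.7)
The plan is to apply Theorems~\ref{teo:ex} and \ref{teo:reg} directly to the set function
\[
F(A):=f\bigl(\lambda_1(A),\ldots,\lambda_k(A)\bigr), \qquad A\in \mathcal A(\Omega),
\]
so that the entire task reduces to verifying that $F$ satisfies the three hypotheses of Theorem~\ref{teo:reg} (which in particular imply the monotonicity required in Theorem~\ref{teo:ex}).

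For lower semicontinuity I would invoke \v{S}ver\'ak's planar stability theorem: if $\Sigma_n\subset\overline\Omega$ are closed and connected with $\Sigma_n\to\Sigma$ in the Hausdorff sense, then the complements of $\Omega\setminus\Sigma_n$ in a fixed large ball have a uniformly bounded number of connected components (the connectedness of each $\Sigma_n$ is crucial here). It follows that $\Omega\setminus\Sigma_n$ $\gamma$-converges, and consequently $\lambda_j(\Omega\setminus\Sigma_n)\to \lambda_j(\Omega\setminus\Sigma)$ for every $j\leq k$. Composing with the lower semicontinuous $f$ delivers the lower semicontinuity of $F$.

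Strict monotonicity on enlargements follows from the strict domain monotonicity of the Dirichlet eigenvalues: enlarging all connected components of $A$ strictly decreases every $\lambda_j$, and since $f$ is strictly decreasing coordinatewise, the value of $F$ strictly grows. Local maxitivity is an elementary consequence of $\lambda_1(B_r(x))=c/r^2\to\infty$ as $r\to 0$; once $r$ is small enough that $\lambda_1(B_r(x))>\lambda_k(A)$, the first $k$ eigenvalues of $B_r(x)$ all exceed those of $A$, and since the eigenvalues of a disjoint union are the sorted concatenation of the pieces, one obtains $\lambda_j(A\cup B_r(x))=\lambda_j(A)$ for every $j\leq k$, whence $F(A\cup B_r(x))=F(A)$.

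The principal obstacle is the verification of lower semicontinuity: Dirichlet eigenvalues are not stable under Hausdorff convergence in general (they can jump in dimension three or higher), but the planar setting together with the a priori connectedness of each competitor $\Sigma_n$ fits \v{S}ver\'ak's hypotheses exactly, which is what ultimately makes the corollary reducible to the main regularity theorem.
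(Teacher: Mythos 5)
Your items (i) and (ii) coincide with the paper's argument: lower semicontinuity via \v{S}ver\'ak's theorem composed with the lower semicontinuity of $f$, and strict monotonicity on enlargements from the strict domain monotonicity of the eigenvalues (each connected component of $\Omega\setminus\Sigma$ is shrunk by a set of positive capacity) combined with the strict decrease of $f$. The gap is in the local maxitivity step. Condition \eqref{eq.wm} is not ``$F(A\cup B_r(x))=F(A)$ for a ball disjoint from $A$''; it is
\[
F\bigl(A_r(x)\cup B_r^\Omega(x)\bigr)=F\bigl(A_r(x)\bigr),\qquad A_r(x)=A\setminus\overline{B_r(x)},
\]
required for \emph{every} $x\in\Omega$ and every $r$ below a single threshold $r_A$: the ball is first carved out of $A$ and then re-attached as a separate component. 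What must therefore be compared is $\lambda_1(B_r^\Omega(x))$ with $\lambda_k(A_r(x))$, not with $\lambda_k(A)$. Since $A_r(x)\subset A$, one has $\lambda_k(A_r(x))\geq\lambda_k(A)$, so your threshold $\lambda_1(B_r(x))>\lambda_k(A)$ is an inequality on the wrong side and does not yield the conclusion; moreover the bound has to be uniform in the center $x\in\overline\Omega$, since the definition demands one $r_A$ working for all centers simultaneously.

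The missing ingredient is a uniform upper bound $\lambda_k(A_r(x))\leq C(A)$ for all $x\in\overline\Omega$ and all $0<r<r_2$ for some $r_2>0$. The paper obtains it from the continuity of $\lambda_k^D$ under Hausdorff convergence: $A_r(x)$ converges to $A\setminus\{x\}$, a point has zero capacity in the plane, hence $\lambda_k^D(A_r(x))\leq 2\lambda_k^D(A)$ for small $r$; combining this with $\lambda_k^D(A)\leq j_{n,m}^2/R(A)^2$ (monotonicity against the inscribed ball) and the two-sided comparison \eqref{ede} produces a constant $c_2^2$, and setting $r_A:=\min\{c_1/c_2,r_2\}$ gives $\lambda_1(B_r^\Omega(x))\geq c_1^2/r^2>c_2^2\geq\lambda_k(A_r(x))$. (A more elementary route to the same uniform bound: for $r$ smaller than a fixed fraction of the inradius of $A$, the carved set $A_r(x)$ still contains a ball of radius comparable to that inradius, and domain monotonicity of $\lambda_k$ applies.) Once this is in place, your final step --- the eigenvalues of a disjoint union are the sorted concatenation of the eigenvalues of the pieces, so the first $k$ eigenvalues of $A_r(x)\cup B_r^\Omega(x)$ are those of $A_r(x)$ --- is exactly the paper's conclusion.
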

By taking $f(\lambda_1,\dots, \lambda_k)=1/\lambda_k$ in Corollary~\ref{spectra} allows to formulate the minimization problem as a maximization one and to obtain existence and Ahlfors regularity of maximizers of the following problem
\begin{equation}\label{CH}
\max\{ \lambda_k(\Omega\setminus \Sigma): \text{$\Sigma\subset \overline \Omega$ closed, connected, and $\haus(\Sigma)=L$}\}.
\end{equation}
The case $k=1$ has been extensively studied in \cite{tilzuc, tilzuc2}, where  the asymptotic behavior of maximizers $\Sigma_L$ as the length $L\to+\infty$ was derived. A one dimensional version of this problem, focusing on optimizing points instead of curves, was posed by Courant and Hilbert in \cite[pp 463--464]{couhil} and analyzed in considerable generality in \cite{tilzuc3}. These investigations find motivation, for instance, in structural engineering applications, particularly in the pursuit of more resilient structure (defined as structures with higher frequencies of vibrations). Such stiffening can be achieved through reinforcement (e.g., attachment of some rigid and hard material) and by fixing the structure along additional areas, modeled by the set $\Sigma$ (where the parameter $L$ can be interpreted as a constraint on construction costs). 

We refer to Section~\ref{sec.app} for the proofs of Corollaries~\ref{cor.mdp} and \ref{spectra} as well as for further applications, including \emph{Poincar\'e-Sobolev constants} and \emph{the maximum of the torsion function} (which now are maxitive).  It is worth mentioning that \emph{non-maxitive} set functions do not fall within this framework. A distinguished example is the torsional rigidity (that is the $L^1$-norm of the torsion function), which is additive (and thus not maxitive). A specific regularity result for this set function has been however obtained in \cite[Section 4.2]{clls}.

\section{Hausdorff topology for closed and open sets}\label{sec.sets}

Let $\Omega$ be a fixed open, bounded, and connected subset of $\mathbb R^2$, $\mathcal A(\Omega)$ and $\mathcal C(\overline\Omega)$ denote the classes of open and closed subsets of $\Omega$ and of $\overline \Omega$, respectively. 
For a closed set $\Sigma$ in $\mathcal C(\overline \Omega)$ we define the \emph{distance function} to $\Sigma$ as 
\begin{equation}\label{eq:dist}
\mathrm{d}(x,\Sigma):=\min_{y\in \Sigma} |x-y|,
\end{equation}
 where $|\cdot |$ is the Euclidean distance in $\mathbb R^n$. The minimum is achieved by Weierstrass theorem, thanks to the compactness of $\Sigma$ and the continuity of $|\cdot |$. Given $\Sigma_1$ and $\Sigma_2$ closed sets in $\mathcal C(\overline\Omega)$ we set $$\rho(\Sigma_1,\Sigma_2):=\max_{x\in \Sigma_1} \mathrm d(x,\Sigma_2)$$ and define the \emph{Hausdorff distance} $\mathrm{d_H}$ between the closed sets $\Sigma_1$ and $\Sigma_2$ as 
$$\mathrm{d_H}(\Sigma_1,\Sigma_2):=\max\{\rho(\Sigma_1,\Sigma_2),\rho(\Sigma_2,\Sigma_1)\}.$$
Equivalently, in terms of functions, one has
\begin{equation}\label{supdist}
\mathrm{d_H}(\Sigma_1,\Sigma_2)=\|\mathrm d(\cdot,\Sigma_1)-\mathrm d(\cdot,\Sigma_2)\|_\infty.
\end{equation}
Then we have the following.
\begin{definition}[Hausdorff convergence of closed sets]
A sequence $\{\Sigma_n\}_n$ of closed sets of $\mathcal C(\overline \Omega)$ is said \emph{Hausdorff converging in $\mathcal C(\overline \Omega)$} if there exists a closed set $\Sigma$ of $\mathcal C(\overline \Omega)$ such that $\mathrm{d_H}(\Sigma_n,\Sigma)\to0$ as $n\to\infty$.
\end{definition}
The family of closed sets endowed with the Hausdorff convergence has two fundamental properties. The first one concerns compactness.

\begin{theorem}[Blaschke]\label{teo.b}
Up to subsequences, every sequence of closed sets $\{\Sigma_n\}_n$ of $ \mathcal C(\overline \Omega)$ is Hausdorff converging in $\mathcal C(\overline \Omega)$.
\end{theorem}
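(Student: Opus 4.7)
The plan is to apply the Arzelà--Ascoli theorem to the sequence of distance functions $\{\mathrm{d}(\cdot,\Sigma_n)\}_n$ and recover the limit set as the zero locus of the uniform limit, exploiting the isometric identification \eqref{supdist} between the Hausdorff distance and the sup norm of distance functions.

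First I would observe that each function $\mathrm{d}(\cdot,\Sigma_n)\colon \overline\Omega\to\mathbb R$ is $1$-Lipschitz, a direct consequence of the triangle inequality applied to \eqref{eq:dist}, and uniformly bounded by $\mathrm{diam}(\overline\Omega)$, which is finite by compactness of $\overline\Omega$. Arzelà--Ascoli then yields, up to a (not relabeled) subsequence, a $1$-Lipschitz limit $d_\infty\colon\overline\Omega\to[0,+\infty)$ with $\mathrm{d}(\cdot,\Sigma_n)\to d_\infty$ uniformly on $\overline\Omega$. (The degenerate case of infinitely many empty $\Sigma_n$ is disposed of trivially; otherwise I assume $\Sigma_n\neq\emptyset$ for every $n$.) I would then define the candidate limit
$$\Sigma:=\{x\in\overline\Omega : d_\infty(x)=0\},$$
which is closed by continuity of $d_\infty$, hence $\Sigma\in\mathcal C(\overline\Omega)$.

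The heart of the proof is then the identification $d_\infty=\mathrm{d}(\cdot,\Sigma)$ on $\overline\Omega$; once this is established, \eqref{supdist} gives $\mathrm{d_H}(\Sigma_n,\Sigma)=\|\mathrm{d}(\cdot,\Sigma_n)-\mathrm{d}(\cdot,\Sigma)\|_\infty\to 0$, which is precisely the desired conclusion. The inequality $d_\infty(x)\leq \mathrm{d}(x,\Sigma)$ is immediate, since $d_\infty$ is $1$-Lipschitz and vanishes on $\Sigma$. For the reverse inequality, I would fix $x\in\overline\Omega$ and select $y_n\in\Sigma_n$ realizing the minimum in \eqref{eq:dist}, so that $|x-y_n|=\mathrm{d}(x,\Sigma_n)$; by compactness of $\overline\Omega$ a sub-subsequence satisfies $y_n\to y\in\overline\Omega$, whence $|x-y|=\lim_n\mathrm{d}(x,\Sigma_n)=d_\infty(x)$. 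To conclude I need $y\in\Sigma$: this follows from $\mathrm{d}(y,\Sigma_n)\leq|y-y_n|\to 0$ combined with the uniform convergence $\mathrm{d}(y,\Sigma_n)\to d_\infty(y)$, forcing $d_\infty(y)=0$; consequently $\mathrm{d}(x,\Sigma)\leq|x-y|=d_\infty(x)$.

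The main obstacle is precisely this last identification. Arzelà--Ascoli alone only delivers a $1$-Lipschitz limit of distance functions, and there is a priori no reason that such a limit be itself the distance function to any particular closed set; the whole game is to show that the natural candidate $\Sigma$ is rich enough to realize $d_\infty$ pointwise. The bridge is the compactness of $\overline\Omega$, used to extract a limit $y$ of the nearest points $y_n\in\Sigma_n$ and then to force its membership in $\Sigma$ via uniform convergence. Everything else reduces to routine manipulation of the triangle inequality together with the reformulation \eqref{supdist}.
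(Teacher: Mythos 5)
Your proof is correct. Note, however, that the paper does not actually prove Theorem~\ref{teo.b}: it defers entirely to \cite[Theorem~4.4.15]{ambtil}, where the result is established for general compact metric spaces, typically via a total-boundedness/diagonal argument on the space of closed subsets equipped with $\mathrm{d_H}$. Your route is different and more hands-on: you embed $\mathcal C(\overline\Omega)$ isometrically into $C(\overline\Omega)$ via $\Sigma\mapsto \mathrm d(\cdot,\Sigma)$, invoke Arzel\`a--Ascoli on the $1$-Lipschitz, uniformly bounded family of distance functions, and then carry out the one genuinely nontrivial step, namely that the uniform limit $d_\infty$ is itself the distance function to its zero set $\Sigma$. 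Both inequalities of that identification are argued correctly: the bound $d_\infty\leq\mathrm d(\cdot,\Sigma)$ from the $1$-Lipschitz property, and the reverse bound by extracting a limit $y$ of nearest points $y_n\in\Sigma_n$ and forcing $d_\infty(y)=0$ via uniform convergence; this last step also shows $\Sigma\neq\emptyset$, which you should perhaps state explicitly before writing $\mathrm d(x,\Sigma)$, but it is not a gap. What each approach buys: yours is short, self-contained, and tailored to the Euclidean compact setting, dovetailing with the identity \eqref{supdist} that the paper has already recorded; the cited proof is more general (arbitrary compact metric spaces) and avoids the function-space detour, which is why the paper can reuse the same reference for Go\l ab's theorem.
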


The second result concerns the lower semicontinuity of the 1-dimensional Hausdorff measure with respect to the Hausdorff convergence of closed and connected sequences of sets. Connectedness is sufficient and not too far from being necessary to achieve this result. A necessary condition (and sufficient, see \cite[Corollary~3.3]{DMT}) would be a uniform bound on the number of connected components. Indeed if $\Omega$ is the unit square and $\Sigma_n$ is the union of the $n$ points $\cup_{j=1}^n\{(j/n,0)\}$ then $\Sigma_n$ is a sequence of closed sets, Hausdorff converging in $\mathcal C(\overline \Omega)$ to $\Sigma=[0,1]\times \{0\}$ and such that $\haus(\Sigma)>\haus(\Sigma_n)$ for all $n\in\mathbb N$, contradicting the lower semicontinuity of $\haus$. For simplicity we will only deal with \emph{continua}, i.e., non-empty connected closed sets. Another advantage is that connectedness passes to the limit in Hausdorff convergence.

\begin{theorem}[Go\l ab]\label{teo.g}
Let $\{\Sigma_n\}_n$ be a sequence of continua of $\mathcal C(\overline \Omega)$ Hausdorff converging in $\mathcal C(\overline \Omega)$ to some closed set $\Sigma$. Then $\Sigma$ is a continuum and
\[
\haus(\Sigma)\leq \liminf_{n\to\infty} \haus(\Sigma_n).
\]
\end{theorem}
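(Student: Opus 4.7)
The plan is to establish the two assertions of the theorem separately: first the preservation of connectedness under Hausdorff limits, and then the lower semicontinuity inequality for $\haus$.

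For connectedness I would argue by contradiction. Suppose $\Sigma$ splits as a disjoint union of non-empty closed sets $A$ and $B$. By compactness $\delta:=\mathrm{d}(A,B)>0$, so the open $\delta/3$-neighbourhoods $U\supset A$ and $V\supset B$ are disjoint. Hausdorff convergence guarantees that for large $n$ every point of $\Sigma_n$ lies within distance $\delta/3$ of $\Sigma$ and hence inside $U\cup V$; simultaneously, points of $A$ and of $B$ are limits of points of $\Sigma_n$, so $\Sigma_n$ meets both $U$ and $V$. This yields a non-trivial clopen decomposition of $\Sigma_n$ in its subspace topology, contradicting its connectedness.

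For the length bound I pass to a subsequence along which $\haus(\Sigma_n)\to L:=\liminf_n\haus(\Sigma_n)$, assuming $L<\infty$ (otherwise the inequality is trivial). The strategy reduces the problem to a compactness statement about Lipschitz curves. Each $\Sigma_n$ is a continuum of finite $1$-dimensional measure, so by the classical Eilenberg--Harrold parametrization theorem (available in \cite{ambtil}) it can be written as the image of a Lipschitz map $\gamma_n\colon[0,1]\to\Sigma_n$ whose Lipschitz constant is bounded by $2\haus(\Sigma_n)$. These Lipschitz constants are uniformly bounded, so Arzel\`a--Ascoli produces a further subsequence converging uniformly to a Lipschitz curve $\gamma\colon[0,1]\to\overline\Omega$. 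Combining uniform convergence of $\gamma_n$ with Hausdorff convergence of their images yields $\gamma([0,1])=\Sigma$.

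The final step recovers the sharp constant via the area formula and a bookkeeping of multiplicities. The Eilenberg--Harrold parametrization $\gamma_n$ has Banach indicatrix $N_n(y):=\#\gamma_n^{-1}(y)\geq 2$ for $\haus$-almost every $y\in\Sigma_n$ (each ``edge'' of the tree-like $\Sigma_n$ is traversed twice), hence
\[
2\haus(\Sigma_n)=\int_0^1|\gamma_n'(t)|\,dt=\int_{\Sigma_n}N_n(y)\,d\haus(y).
\]
A Fatou-type lower semicontinuity of the indicatrix along the uniform limit, together with the length inequality $\int_0^1|\gamma'|\,dt\leq\liminf_n\int_0^1|\gamma_n'|\,dt$, then gives $2\haus(\Sigma)\leq 2L$, i.e.\ the required bound $\haus(\Sigma)\leq L$. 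The multiplicity bookkeeping in this last step is the main technical obstacle: the pointwise inequality $\liminf_n N_n(y)\geq N(y)$ is generally false, but its integrated form against $\haus$ on the limit can be recovered from the tree structure of the parametrizations. The connectedness established in the first part is essential here, since it is what makes the Eilenberg--Harrold parametrization applicable to the limit.
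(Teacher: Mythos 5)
Your first two steps are sound and essentially standard: the clopen-decomposition argument correctly shows that a Hausdorff limit of continua is a continuum, and the combination of the Eilenberg--Harrold parametrization, Arzel\`a--Ascoli, the identification $\gamma([0,1])=\Sigma$, and lower semicontinuity of length under uniform convergence correctly yields $\haus(\Sigma)\le \mathrm{length}(\gamma)\le\liminf_n \mathrm{length}(\gamma_n)\le 2\liminf_n\haus(\Sigma_n)$. Note, however, that the paper itself does not prove this theorem but cites \cite[Theorem~4.4.17]{ambtil}, whose proof follows a different, measure-theoretic route; what you have fully established is only the ``easy'' version of Go\l ab's theorem, with an extra factor $2$.

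Removing that factor $2$ is the entire content of the theorem, and your final step does not do it. What you would need is that the \emph{limit} curve $\gamma$ covers $\haus$-a.e.\ point of $\Sigma$ at least twice, so that $2\haus(\Sigma)\le\int_\Sigma N\,d\haus=\mathrm{length}(\gamma)\le 2L$. This does not follow from $N_n\ge2$: two distinct parameters $t_n^1\neq t_n^2$ with $\gamma_n(t_n^1)=\gamma_n(t_n^2)$ can converge to the same limit parameter (this happens systematically near the turn-around points of a tour of a tree, e.g.\ when shorter and shorter ``hairs'' sweep across the limit set), and a closed Lipschitz curve can cover its image with multiplicity $1$ (a circle), so no soft topological argument rescues the bound. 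You flag this yourself as ``the main technical obstacle'' and assert that the integrated multiplicity bound ``can be recovered from the tree structure,'' but that assertion \emph{is} the theorem; moreover, finite-length continua need not be tree-like (they may contain loops), so even the claims ``$N_n\ge2$ a.e.'' and ``$\mathrm{length}(\gamma_n)=2\haus(\Sigma_n)$'' require you to force the tour to double back over loops. The two standard ways to close the gap are: (a) the route of the cited reference --- extract a weak-$*$ limit $\mu$ of the measures $\haus\llcorner\Sigma_n$, use your crude bound to conclude $\haus(\Sigma)<\infty$ and hence that $\Sigma$ is rectifiable, then at $\haus$-a.e.\ $x\in\Sigma$ use the approximate tangent line together with the connectedness of the $\Sigma_n$ to get $\mu(\overline{B_r(x)})\ge(2-\varepsilon)r$ for small $r$, i.e.\ lower density at least $1$, whence $\mu\ge\haus\llcorner\Sigma$; or (b) the parametric route you are sketching, which Alberti and Ottolini carried out in full and in which the multiplicity bookkeeping in the limit is precisely the delicate part. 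As written, your proof establishes connectedness of the limit and the inequality with constant $2$, but not the stated statement.
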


There are several references where the proofs are provided the interested reader can consult, for instance, \cite[Theorem~4.4.15]{ambtil} and \cite[Theorem~4.4.17]{ambtil}  which contain proofs in the general framework of metric spaces. 

For introducing the strict monotonicity condition, we need the following.

\begin{definition}[Enlargement of continua]
Given $\Sigma\in\mathcal{C}(\overline\Omega)$ connected, an \emph{enlargement} $\Sigma'\in\mathcal{C}(\overline\Omega)$ of $\Sigma$ is a set that satisfies the following properties:
\begin{itemize}
\item (\emph{Connectedness}) $\Sigma'$ is connected;
\item (\emph{Containment}) $\Sigma'\supsetneq \Sigma$;
\item (\emph{Shrinking of connected components}) for each connected component $A'$ of $\Omega\setminus \Sigma'$ there exists a connected component $A$ of $\Omega\setminus \Sigma$ with $A'\subsetneq A$.
\end{itemize}
\end{definition}

The interest of shrinking the size of \emph{each} connected component of $\Omega\setminus \Sigma$, and not of merely one of them, is motivated by later applications of the main result Theorem~\ref{teo:reg} to some specific problems. Indeed, the examples of set functions used in this paper are decreasing only when all the connected components shrink. The following result shows (in a constructive way) that there always exists such an enlargement.

\begin{lemma}\label{lemma2}
Let $L>0$ and $\Sigma\in\mathcal{C}(\overline\Omega)$ be a connected set with $0<\haus(\Sigma)< L$. Then there exists an enlargement $\Sigma'\in\mathcal{C}(\overline\Omega)$ of $\Sigma$ with $\haus(\Sigma')=L$.
\end{lemma}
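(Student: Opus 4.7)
The plan is to build $\Sigma'$ by attaching a small rectifiable arc into each connected component of $\Omega\setminus \Sigma$, calibrated so that the total length added equals $L-\haus(\Sigma)$. Since $\Omega\setminus \Sigma$ is a planar open set, it has at most countably many connected components $\{A_i\}_{i\in I}$. For each $i$ I first observe that $\partial A_i \cap \Sigma\neq \emptyset$: otherwise $\partial A_i$ would lie inside $\partial \Omega$, making $A_i$ both open and closed in the connected set $\Omega$, contradicting $0<\haus(\Sigma)$. Pick then a point $z_i\in \partial A_i\cap \Sigma$ for each $i$.

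Next I fix a family of positive numbers $\{\varepsilon_i\}_{i\in I}$ with $\sum_{i\in I}\varepsilon_i = L-\haus(\Sigma)$ and $\varepsilon_i\to 0$ when $I$ is infinite (for example $\varepsilon_i = (L-\haus(\Sigma))\,2^{-i}$). For each $i$ I construct a simple rectifiable arc $S_i\subset \overline{A_i}\cap \overline{B_{\varepsilon_i}(z_i)}$ with one endpoint at $z_i$, the rest inside $A_i$, and $\haus(S_i)=\varepsilon_i$. Finally I set
\[
\Sigma' := \Sigma \cup \bigcup_{i\in I} S_i.
\]

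Once this is constructed, each property of an enlargement follows routinely. Closedness: any accumulation point of $\Sigma'$ that is not already in some $S_i$ must come from a sequence $x_n\in S_{i_n}$ with $i_n\to \infty$; since $|x_n-z_{i_n}|\le \varepsilon_{i_n}\to 0$ and $z_{i_n}\in \Sigma$, the limit lies in the closed set $\Sigma$. Connectedness: every $S_i$ touches $\Sigma$ at $z_i$, so $\Sigma \cup S_i$ is connected and all of these connected sets share the point $z_1\in \Sigma$. Strict containment $\Sigma'\supsetneq \Sigma$ is clear since $S_i\setminus\{z_i\}\subset A_i$ is non-empty. For the shrinking property, any connected component $A'$ of $\Omega\setminus \Sigma'$ is contained in some component $A_i$ of $\Omega\setminus \Sigma$ (because $\Sigma\subset \Sigma'$), and it is strictly contained because $S_i\cap A_i\neq \emptyset$ is a point of $A_i$ removed from $\Omega\setminus \Sigma'$. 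Finally, since for $i\neq j$ the inclusions $S_i\subset \overline{A_i}$ and $S_j\subset \overline{A_j}$ force $S_i\cap S_j\subset \partial A_i\cap \partial A_j\cap \Sigma$ (an at most countable set of points), and similarly $S_i\cap \Sigma\subset\{z_i\}$, the $\haus$-additivity gives $\haus(\Sigma') = \haus(\Sigma) + \sum_{i\in I}\varepsilon_i = L$.

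The main obstacle is the non-trivial construction of $S_i$: a generic boundary point of an open planar set need not be reachable from inside by a rectifiable arc in a controlled way. To deal with this I would exploit the fact that $\haus(\Sigma)<\infty$, so $\Sigma$ is $1$-rectifiable and at $\haus$-a.e.\ of its points admits a tangent line. Replacing $z_i$ if necessary by a nearby such regular point $\tilde z_i\in \partial A_i\cap \Sigma$ (which still lies on $\partial A_i$ by density), one sees that in any direction transversal to the tangent line the radial segment from $\tilde z_i$ enters $A_i$ for all sufficiently short lengths, yielding the desired segment $S_i$ of length $\varepsilon_i$ meeting $\Sigma$ only at $\tilde z_i$. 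Shrinking $\varepsilon_i$ if needed, one can also ensure $S_i\subset \overline{B_{\varepsilon_i}(\tilde z_i)}$, preserving the accumulation analysis used for closedness.
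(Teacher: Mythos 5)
Your overall architecture --- enumerate the at most countably many components $A_i$ of $\Omega\setminus\Sigma$, attach to each a simple arc of prescribed length $\varepsilon_i$ with $\sum_i\varepsilon_i=L-\haus(\Sigma)$, then verify closedness, connectedness, strict containment, the shrinking of every component, and the length count --- is exactly that of the paper's proof, and those routine verifications are fine (indeed more detailed than the paper's). The genuine gap is where you yourself locate the difficulty: the construction of $S_i$. The rectifiability/tangent-line device does not work as stated, for three reasons. First, a $1$-rectifiable set admits at $\haus$-a.e.\ point only an \emph{approximate} tangent line, i.e.\ the part of $\Sigma$ outside a cone around it has vanishing density at $\tilde z_i$; this does not place $\Sigma$ locally inside the cone, so a segment leaving $\tilde z_i$ transversally may still meet $\Sigma$ at points accumulating at $\tilde z_i$. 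Second, even a short segment that does avoid $\Sigma$ is only guaranteed to lie in \emph{some} component of $\Omega\setminus\Sigma$, not in $A_i$: the component $A_i$ may approach $\tilde z_i$ only tangentially along $\Sigma$, so that no transversal direction enters it, and you never argue that your segment contains a point of $A_i$. Third, the replacement of $z_i$ by a nearby regular point ``which still lies on $\partial A_i$ by density'' is unjustified: regular points are dense in $\Sigma$, but $\partial A_i\cap\Sigma$ is merely a closed subset of $\Sigma$, and a dense subset of $\Sigma$ need not meet it near $z_i$.

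The paper sidesteps all of this by building the arc from the inside out instead of from the boundary inward, and this is the missing idea. Pick $x_0\in A_i$ with $\mathrm d(x_0,\Sigma)>0$ and let $y_0\in\Sigma$ realize the minimum in \eqref{eq:dist}. Minimality gives $\mathrm d(x_t,\Sigma)=(1-t)|x_0-y_0|>0$ for every $x_t=(1-t)x_0+ty_0$ with $t\in[0,1)$, so the segment $[x_0,y_0]$ meets $\Sigma$ only at its endpoint $y_0$; taking $t$ close enough to $1$, the terminal portion $[x_t,y_0]$ lies in $A_i$ except for $y_0$, and its length can be calibrated at will (appending extra radii inside a small ball $B_r(x_t)\subset A_i$ if $\varepsilon_i$ exceeds $|x_t-y_0|$). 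No rectifiability of $\Sigma$, no tangent lines, and no choice of ``side'' are needed. If you replace your construction of $S_i$ by this nearest-point projection, the rest of your argument goes through essentially verbatim and coincides with the paper's proof.
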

\begin{proof}
We divide the construction of an enlargement $\Sigma'$ of $\Sigma$ in two steps. Then, from this construction, it turns out that $\Sigma'$ has length $L$.
 
1. \emph{Shrinking a connected component of $\Omega\setminus \Sigma$.} Consider a connected component $A$ of $\Omega\setminus\Sigma$. Since $\haus(\Sigma)< L$ and $\Sigma$ is closed then there exists a point $x_0\in A$ with $\mathrm d(x_0,\Sigma)>0$.  
By definition, the minimum in \eqref{eq:dist} is taken at least for one point $y_0\in \Sigma$, thus $y_0\neq x_0$ and $\mathrm d(x_0,\Sigma)=|x_0-y_0|>0$. Then consider $x_t=(1 - t)x_0 + ty_0$ for some large enough $t\in [0,1)$ so that the segment $s$ linking $x_t$ to $y_0$ lies inside $A$ (except of course at the endpoint $y_0$).  
Notice that, by minimality, $y_0$ is the unique point of the segment $s$ that also belongs to $\Sigma$ and $y_0$ is then still such that $$\mathrm d(x_t, \Sigma) = |x_t-y_0|= |(1 - t)x_0 + t y_0 - y_0|=(1-t)|x_0-y_0|>0.$$
Since $A$ is an open set we may consider a ball $B_r(x_t)$ with center $x_t\in A$ and radius $r>0$ small enough such that $B_{r}(x_t)\subset A$;  
as $B_r(x_t)\subset A\subset\Omega\setminus \Sigma$, then necessarily $B_r(x_t)$ does not intersect $\Sigma$. To extend $\Sigma$ to an appropriate length, we can add to $\Sigma$ the line $s$ (or a portion thereof) and any other radius (or a portion of radius) leaving the center $x_t$ of $B_r(x_t)$, as showed in Figure~\ref{figura}.
 
2. \emph{Shrinking all connected component of $\Omega\setminus \Sigma$.} We apply the previous construction to each connected component $A_i$ of $\Omega\setminus\Sigma$ (since $\Omega\setminus \Sigma$ is an open set of $\mathbb R^2$, these components are at most countable). Then, we obtain at most countably many $\{s_i\}_i$ of arbitrary lengths and we can select these lengths in such a way that the resulting set $\Sigma'$ matches the length $L$.
 \end{proof}

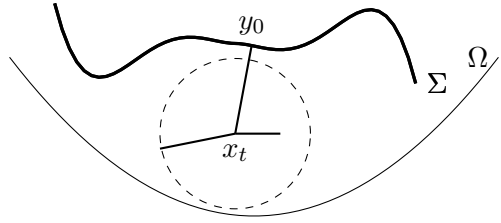
\begin{figure}
\centering
\begin{tikzpicture}[>=latex,scale=0.2, domain=-4:4, samples=50]
 
  \coordinate (x0) at (0,5) {}; 
  \coordinate (y0) at (1.1,10.9) {}; 
  \coordinate (x1) at (3,5) {}; 
  \coordinate (x2) at (-5,4) {}; 
 
  \draw[thick, -] (x0) node[below]  {$x_t$} -- (y0) node[above] {$y_0$}; 
  \draw[thick, -] (x0) -- (x1) ;
  \draw[thick, -] (x0) -- (x2); 
  \draw[dashed] (x0) circle (5cm);
  \draw[very thick, color=black] plot (3*\x,{-1/4*\x^4+4/3*\x^3-3/2*\x^2+11}) node[right] {$\Sigma$};
  \draw[very thick, color=black] plot (3*\x,{-1/4*\x^4+4/3*\x^3-3/2*\x^2+11}) node[right] {$\Sigma$};
  \draw[domain=-3:3.5] plot (5*\x,{(\x-1/2)^2+1/2*\x-2/3}) node[left] {$\Omega$};

\end{tikzpicture}
 \caption{How to extend the length of $\Sigma$.} \label{figura}
\end{figure}

The Hausdorff convergence of open sets is simply defined by taking complements, employing the definition provided for closed sets.
\begin{definition}[Hausdorff convergence of open sets]
A sequence $\{A_n\}_n$ of open sets of $\mathcal A(\Omega)$ is said \emph{Hausdorff converging in $\mathcal A(\Omega)$} if the sequence $\{\overline\Omega\setminus A_n\}_n$ of closed sets of $\mathcal C(\overline \Omega)$ is Hausdorff converging in $\mathcal C(\overline\Omega)$.
\end{definition}  

The family of open sets in $\Omega$ endowed with the Hausdorff convergence is also compact (as already noticed in \cite[Corollary 2.2.26]{HP} this follows rather directly from Theorem \ref{teo.b} reasoning with the complements).

\begin{corollary}[Blaschke]\label{cor.bla}
Up to subsequences, every sequence of open sets $\{A_n\}_n$ of $ \mathcal A(\Omega)$ is Hausdorff converging in $\mathcal A(\Omega)$.
\end{corollary}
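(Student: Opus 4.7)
The plan is a direct reduction to Theorem~\ref{teo.b} via the very definition of Hausdorff convergence of open sets. Given a sequence $\{A_n\}_n \subset \mathcal A(\Omega)$, I would pass to the sequence of complements $\Sigma_n := \overline{\Omega}\setminus A_n$. Each $A_n$ is an open subset of $\Omega$, so each $\Sigma_n$ is a closed subset of $\overline{\Omega}$, i.e., $\{\Sigma_n\}_n \subset \mathcal{C}(\overline{\Omega})$. Applying Theorem~\ref{teo.b} (Blaschke for closed sets) then yields a subsequence (not relabelled) and a closed set $\Sigma \in \mathcal C(\overline{\Omega})$ with $\mathrm{d_H}(\Sigma_n,\Sigma)\to 0$. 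Since the definition of Hausdorff convergence of open sets in $\mathcal A(\Omega)$ is literally the Hausdorff convergence in $\mathcal C(\overline\Omega)$ of the sequence of complements $\{\overline\Omega\setminus A_n\}_n$, this is precisely what is to be shown.

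To identify the limit as the complement of a canonical open set $A \in \mathcal{A}(\Omega)$, I would set $A := \Omega\setminus\Sigma$, and it is then worth recording the small verification that $\partial\Omega\subset\Sigma$, so that $\overline\Omega\setminus A = \Sigma$ (and hence the convergence can be read symmetrically as $A_n \to A$). This containment follows from the general fact that Hausdorff convergence preserves inclusion of fixed points: since every $x\in\partial\Omega$ lies in each $\Sigma_n$, one has $\mathrm d(x,\Sigma_n)=0$ for all $n$, so by \eqref{supdist} also $\mathrm d(x,\Sigma)=0$, and closedness of $\Sigma$ forces $x\in\Sigma$. Thus $\Sigma\supset\partial\Omega$, which gives $A\subset\Omega$ and, being the complement of a closed set, $A$ is open, i.e., $A\in\mathcal A(\Omega)$.

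There is essentially no obstacle: the corollary is a formal consequence of Theorem~\ref{teo.b} once the definition of Hausdorff convergence of open sets is unwound. The only point that deserves a word of care—if one wants the limit to live in $\mathcal A(\Omega)$—is the containment $\partial\Omega\subset\Sigma$ discussed above.
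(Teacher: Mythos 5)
Your proposal is correct and follows exactly the route the paper indicates (and attributes to \cite[Corollary 2.2.26]{HP}): pass to the complements $\overline\Omega\setminus A_n$, apply Theorem~\ref{teo.b}, and unwind the definition of Hausdorff convergence in $\mathcal A(\Omega)$. The additional verification that $\partial\Omega\subset\Sigma$, so that the limit can be identified with $\overline\Omega\setminus A$ for an open $A\in\mathcal A(\Omega)$, is a sound and welcome extra detail, though strictly speaking the definition of convergence in $\mathcal A(\Omega)$ only asks that the complements converge in $\mathcal C(\overline\Omega)$.
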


Keep in mind that these two notions of Hausdorff convergence (for closed and open sets) are quite distinct: results that can be true for closed sets may not apply to open sets, and vice versa. A typical example is connectedness, which does not necessarily pass to the limit for sequences of open sets, see \cite[page 35]{HP} (unlike the limit of connected closed sets, as stated earlier in the Golab theorem). 

We will also need the behavior of Hausdorff convergence with respect to monotone (with respect to set inclusion) sequences of sets, for instance see \cite[Paragraph 2.2.3.2]{HP}. 

\begin{lemma}\label{lem.mon}
A nondecreasing (in the sense of set inclusion) sequence of open sets of $\mathcal A(\Omega)$ is Hausdorff converging in $\mathcal A(\Omega)$  to its union.
\end{lemma}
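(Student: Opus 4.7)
The plan is to translate the statement into one about closed sets via the definition, and then invoke Dini's theorem on the corresponding distance functions. So let $\{A_n\}_n$ be a nondecreasing sequence in $\mathcal A(\Omega)$ with $A=\bigcup_n A_n$, and set $\Sigma_n:=\overline\Omega\setminus A_n$ and $\Sigma:=\overline\Omega\setminus A$. By De Morgan the sequence $\{\Sigma_n\}_n$ is nonincreasing in $\mathcal C(\overline\Omega)$ with $\bigcap_n \Sigma_n=\Sigma$, and by the definition of Hausdorff convergence of open sets it suffices to prove $\mathrm{d_H}(\Sigma_n,\Sigma)\to 0$. Via the identity \eqref{supdist}, this amounts to showing $\|\mathrm d(\cdot,\Sigma_n)-\mathrm d(\cdot,\Sigma)\|_\infty\to 0$.

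The first step is pointwise convergence. Because $\Sigma\subseteq \Sigma_n$, we have $\mathrm d(x,\Sigma_n)\leq \mathrm d(x,\Sigma)$ for all $x\in\overline\Omega$; and because $\Sigma_{n+1}\subseteq \Sigma_n$, the sequence $\mathrm d(x,\Sigma_n)$ is nondecreasing in $n$. It therefore converges pointwise to some $f(x)\leq \mathrm d(x,\Sigma)$. To see the reverse inequality, for each $n$ pick $y_n\in\Sigma_n$ attaining the minimum in \eqref{eq:dist}. Since $\{y_n\}_n\subset\overline\Omega$ is bounded, up to a subsequence $y_{n_k}\to y^\star\in\overline\Omega$; for every fixed $m$ and every $n_k\geq m$ we have $y_{n_k}\in\Sigma_{n_k}\subseteq \Sigma_m$, and closedness of $\Sigma_m$ gives $y^\star\in\Sigma_m$. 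Hence $y^\star\in\bigcap_m\Sigma_m=\Sigma$, and $\mathrm d(x,\Sigma)\leq |x-y^\star|=\lim_k|x-y_{n_k}|=f(x)$.

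The second step upgrades pointwise convergence to uniform convergence. The functions $x\mapsto \mathrm d(x,\Sigma_n)$ are $1$-Lipschitz, hence continuous, and the limit $x\mapsto \mathrm d(x,\Sigma)$ is also continuous. Since $\overline\Omega$ is compact and the convergence is monotone, Dini's theorem yields uniform convergence on $\overline\Omega$, i.e., $\mathrm{d_H}(\Sigma_n,\Sigma)\to 0$. By the definition of Hausdorff convergence of open sets, this is exactly $A_n\to A$ in $\mathcal A(\Omega)$.

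The only mildly delicate point is the pointwise identification $f(x)=\mathrm d(x,\Sigma)$: one must be careful that the projections $y_n$ do not escape to a point outside $\Sigma$, which is handled by the nested structure $\Sigma_{n_k}\subseteq\Sigma_m$ together with closedness. Everything else (monotonicity, Lipschitz continuity, and Dini) is soft.
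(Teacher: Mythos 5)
Your proof is correct and complete; the paper itself does not prove Lemma~\ref{lem.mon} but only cites \cite[Paragraph 2.2.3.2]{HP}, and your argument (monotone pointwise convergence of the distance functions $\mathrm d(\cdot,\Sigma_n)\nearrow \mathrm d(\cdot,\Sigma)$, identified via the nested-projection argument, upgraded to uniform convergence by Dini on the compact set $\overline\Omega$, then translated back through \eqref{supdist}) is exactly the standard one behind that reference. The one point worth noting, which causes no trouble here, is that $\Sigma=\overline\Omega\setminus A\supseteq\partial\Omega$ is automatically nonempty since $A\subseteq\Omega$, so all the distance functions are well defined.
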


\section{Set functions}\label{sec.func}

We consider the set function $F\colon \mathcal A(\Omega)\to [0,+\infty]$,
defined merely on the open subsets of $\Omega$ and we allow for possibly unbounded values.
We introduce several definitions, motivated by set functions for which we apply these results, as detailed in Section~\ref{sec.app}. 

The first two definitions are necessary for the existence of the minimization problem; the others are specific for regularity results. 

\begin{definition}[Lower semicontinuity and continuity]
A given set function $F\colon \mathcal A(\Omega)\to [0,+\infty]$ is \emph{lower semicontinuous} if whenever $\{A_n\}_n$ is Hausdorff converging in $\mathcal A(\Omega)$ to $A$
\begin{equation}\label{eq.ls}
\liminf_{n\to\infty} F(A_n)\geq F(A).
\end{equation}
We call $F$ \emph{continuous}, whenever this $\liminf$ is a limit and the inequality above holds with the equality.
\end{definition}

\begin{definition}[Monotonicity]
A set function $F\colon \mathcal A(\Omega)\to [0,+\infty]$ is \emph{monotone} if whenever $A_1,A_2\in\mathcal A(\Omega)$ with $A_1\subset A_2$
\begin{equation}\label{eq.mo}
F(A_1)\leq F(A_2).
\end{equation}
\end{definition}

The Lebesgue measure is a typical example of lower semicontinuous and monotone set function, see \cite[Proposition 2.2.23]{HP}. Now, for the regularity result, we need refined versions of the monotonicity property.

\begin{definition}[Strict monotonicity] A set function $F\colon \mathcal A(\Omega)\to [0,+\infty]$ is \emph{strictly monotone on enlargements} if it is monotone and whenever $\Sigma\in\mathcal C(\Omega)$ is connected with $0<\haus(\Sigma)< L$ for some $L>0$ there exists an enlargement $\Sigma'$ of $\Sigma$ such that
\begin{equation}\label{eq.sm}
F(\Omega\setminus \Sigma')< F(\Omega\setminus\Sigma).
\end{equation}
\end{definition}

Lemma~\ref{lemma2} ensures that an enlargement of a set can always be found. However, notice that the strict monotonicity might hold for some other, possibly unknown, enlargement.  We also point out that the strict monotonicity is only required to hold on \emph{at least one} enlargement. This is because many set functions are strictly monotone only on some enlargement, rather than on all possible enlargements (this is the case, for instance, for the inradius of a set).

To introduce the next notion we take a detour through an important family of set functions. A noteworthy subclass of monotone set functions is represented by maxitive set functions, which have found application across various research domains as a viable alternative to additive functions.

\begin{definition}[Maxitivity]
A set function $F\colon \mathcal A(\Omega)\to [0,+\infty]$ is \emph{maxitive} if whenever $A_1,A_2\in\mathcal A(\Omega)$ with $A_1\cap A_2=\emptyset$ 
\begin{equation}\label{eq.ma}
F(A_1\cup A_2)= \max \{F(A_1),F(A_2)\}.
\end{equation}
\end{definition}

The following results, Proposition~\ref{prop.mon}, Proposition~\ref{prop.mon2}, and Corollary~\ref{cor.mon3}, concerning some properties of maxitive set functions, have their own interest, but they do not serve for obtaining the main results of the paper. 

It is immediate to prove that a maxitive set function is also monotone. In fact, as shown in the next result, monotone set functions are equivalent to set functions that satisfy a supermaxitivity condition. 

\begin{proposition}[Monotonicity\&Maxitivity]\label{prop.mon}
A set function $F\colon \mathcal A(\Omega)\to [0,+\infty]$ is monotone if and only if whenever $A_1,A_2\in\mathcal A(\Omega)$ with $A_1\cap A_2=\emptyset$
\[
F(A_1\cup A_2)\geq \max \{F(A_1),F(A_2)\}.
\]
Then, a maxitive set function $F$ is always a monotone set function.
\end{proposition}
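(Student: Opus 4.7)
My plan is to prove the equivalence in two steps and then derive the corollary.

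For \emph{monotone} $\Rightarrow$ \emph{super-maxitive on disjoint pairs}, I would argue directly: given disjoint $A_1,A_2\in\mathcal A(\Omega)$, both $A_1$ and $A_2$ are contained in $A_1\cup A_2$, so applying \eqref{eq.mo} to each of these two inclusions gives $F(A_i)\le F(A_1\cup A_2)$ for $i=1,2$. Taking the maximum produces the super-maxitivity inequality; note that disjointness plays no role in this direction.

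For the reverse direction, given $A_1\subset A_2$ I would try to express $A_2$ through a disjoint decomposition involving $A_1$. Since $A_2\setminus A_1$ need not be open, I would instead consider the open set $V:=A_2\setminus\overline{A_1}$, which is disjoint from $A_1$ and contained in $A_2$. Applying the hypothesis to the pair $(A_1,V)$ yields $F(A_1\cup V)\ge F(A_1)$; here $A_1\cup V=A_2\setminus\partial A_1$. The hard part will be to transfer this inequality to $A_2$ itself, i.e., to absorb the boundary layer $\partial A_1\cap A_2$. I would attempt this either via a limiting argument along a nested increasing sequence of such open approximations (cf.\ Lemma~\ref{lem.mon}) or by iterating the disjoint super-maxitivity after further decomposing $A_2$ into components that pair disjointly with $A_1$.

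The concluding assertion---that every maxitive set function is monotone---then follows at once from the equivalence, since maxitivity \eqref{eq.ma} on disjoint pairs is a strengthening (equality) of the super-maxitivity inequality required in the proposition, so the \emph{if} direction applies.
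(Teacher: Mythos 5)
Your first implication is fine and coincides with the paper's argument. The genuine gap is in the converse, and you have located it precisely: one cannot in general write $A_2$ as a disjoint union of $A_1$ with another \emph{open} set, because any open set disjoint from $A_1$ misses the layer $\partial A_1\cap A_2$. (The paper's own proof glosses over this: it applies the disjoint-pair inequality to $(A_1,A_2\setminus A_1)$, tacitly treating $A_2\setminus A_1$ as an element of $\mathcal A(\Omega)$, which it need not be.) Your replacement $V:=A_2\setminus\overline{A_1}$ only yields $F(A_1)\leq F(A_2\setminus\partial A_1)$, and neither of your proposed repairs can close the remaining gap: no exhaustion of $A_2$ by sets of the form $A_1\cup V$ with $V$ open and $V\cap A_1=\emptyset$ exists (every such union omits $\partial A_1\cap A_2$), and a limiting argument via Lemma~\ref{lem.mon} would in any case require a semicontinuity hypothesis on $F$ that this proposition does not assume.

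In fact the implication ``supermaxitive on disjoint open pairs $\Rightarrow$ monotone'' fails for set functions defined only on open sets, so no completion of your plan is possible without extra hypotheses. Take $F(A)=1$ if $A$ is disconnected and $F(A)=0$ if $A$ is connected or empty: two disjoint nonempty open sets always have disconnected union, so $F(A_1\cup A_2)=1\geq\max\{F(A_1),F(A_2)\}$ and the supermaxitivity inequality holds, yet $F$ is not monotone, since the union of two disjoint balls is contained in the connected set $\Omega$. Your final paragraph is affected as well: since the components of a disjoint union of open sets are exactly the components of the two pieces, the function $F(A)=1$ iff some connected component of $A$ has diameter below a fixed threshold satisfies the maxitivity \emph{equality} \eqref{eq.ma} on disjoint open pairs and is still non-monotone. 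The statement becomes true if one strengthens the hypothesis to $F(A_1\cup A_2)\geq\max\{F(A_1),F(A_2)\}$ for all (not necessarily disjoint) pairs, in which case monotonicity is immediate by applying it to the pair $(A_1,A_2)$ with $A_1\subset A_2$, or if one works on a class of sets closed under the relevant differences; as written, though, both your argument and the paper's break at the same non-open set $A_2\setminus A_1$.
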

\begin{proof}
If $F$ is monotone then by trivial inclusions $F(A_1\cup A_2)$ is greater either of $F(A_1)$ and of $F(A_2)$, thus $F$ is supermaxitive. Conversely, if $F$ is a supermaxitive function and $A_1\subset A_2$ then $A_2=A_1\cup (A_2\setminus A_1)$ and we have
\[
F(A_1)\leq \max \{F(A_1),F(A_2\setminus A_1)\}\leq F(A_1\cup (A_2\setminus A_1))=F(A_2),
\]
which proves the monotonicity of $F$.
\end{proof}

\begin{remark}
\emph{Additive} set functions fulfill the supermaxitivity condition stated in Proposition~\ref{prop.mon}, thereby represent another subclass of monotone functions. Prominent examples of additive set functions are measures. 
\end{remark}
 
By combining the maxitivity of a set function with its lower semicontinuity one may infer a $\sigma$-maxitivity property, which holds more generally over the union of countable connected components.
\begin{proposition}[$\sigma$-maxitivity] \label{prop.mon2}
Let $F\colon \mathcal A(\Omega)\to [0,+\infty]$ be a lower semicontinuous and maxitive set function. 
If $A_n\in\mathcal A(\Omega)$ for every $n\in \mathbb N$ with $A_i\cap A_j=\emptyset$ for every $i,j\in\mathbb N$ such that $i\neq j$ then
\[
F\left(\bigcup_{n=1}^\infty A_n\right)=\sup_{n\in\mathbb N} \{F(A_n)\}.
\]
\end{proposition}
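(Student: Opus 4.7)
The plan is to reduce to finite maxitivity plus a monotone-convergence argument using Lemma~\ref{lem.mon}. First, from \eqref{eq.ma} and an easy induction on $N$, one obtains the finite version
\[
F\Bigl(\bigcup_{n=1}^{N} A_n\Bigr)=\max_{1\leq n\leq N} F(A_n)
\]
for the pairwise disjoint open sets $A_1,\dots,A_N$. Denote $B_N:=\bigcup_{n=1}^N A_n$ and $B:=\bigcup_{n=1}^\infty A_n$; clearly $\{B_N\}_N$ is a nondecreasing sequence of open sets in $\mathcal A(\Omega)$ with union $B$.

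Next I would invoke Lemma~\ref{lem.mon}: since $\{B_N\}_N$ is nondecreasing, it is Hausdorff converging in $\mathcal A(\Omega)$ to $B$. Applying the lower semicontinuity assumption \eqref{eq.ls} yields
\[
F(B)\leq \liminf_{N\to\infty} F(B_N)=\liminf_{N\to\infty}\max_{1\leq n\leq N} F(A_n)=\sup_{n\in\mathbb N} F(A_n).
\]

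For the reverse inequality I would use only monotonicity, which is free here because by Proposition~\ref{prop.mon} every maxitive set function is monotone. Since $A_n\subset B$ for every $n\in\mathbb N$, \eqref{eq.mo} gives $F(A_n)\leq F(B)$, hence $\sup_{n\in\mathbb N} F(A_n)\leq F(B)$. Combining both bounds produces the claimed equality.

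I don't anticipate a genuine obstacle: the only mildly delicate point is making sure that the sets $B_N$ genuinely lie in $\mathcal A(\Omega)$ (which is obvious as finite unions of elements of $\mathcal A(\Omega)$) and that Lemma~\ref{lem.mon} applies verbatim to $\{B_N\}_N$, which it does because the lemma is stated exactly for nondecreasing sequences of open sets converging to their union.
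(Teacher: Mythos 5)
Your proposal is correct and follows essentially the same route as the paper: finite maxitivity by induction, Hausdorff convergence of the nondecreasing partial unions via Lemma~\ref{lem.mon} combined with lower semicontinuity for the upper bound, and monotonicity (from Proposition~\ref{prop.mon}) for the lower bound. No gaps.
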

\begin{proof}
We start by proving the inequality $\geq$. 
By Proposition~\ref{prop.mon} the set function $F$ is monotone, thus since for $i\in\mathbb N$ the set $A_i\subset \bigcup_{n=1}^\infty A_n$ we have 
\[
F\left(\bigcup_{n=1}^\infty A_n\right)\geq F(A_i).
\]
By the arbitrariness of $i\in\mathbb N$ we obtain the desired inequality. 

To prove the reverse inequality $\leq$ consider $N\in\mathbb N$ and apply $(N-1)$ times the maxitivity condition to obtain
\[
F\left(\bigcup_{n=1}^N A_n\right)=\max_{1\leq n\leq N} F(A_n)\leq \sup_{n\in\mathbb N} F(A_n).
\]
Now, for every $N\in\mathbb N$ consider the open set $A'_N:=\bigcup_{n=1}^N A_n$ of $\mathcal A(\Omega)$. The sequence  $\{A'_N\}$ is nondecreasing with respect to set inclusion, thus by Lemma~\ref{lem.mon} it is Hausdorff converging in  $\mathcal A(\Omega)$ to its union $\bigcup_{n=1}^\infty A_n$. Taking the liminf as $N\to\infty$ in the inequality above, by the lower semicontinuity of $F$ we obtain the reverse inequality.
\end{proof}

With further assumptions the supremum is indeed a maximum. 
\begin{corollary}[$\sigma$-maxitivity]\label{cor.mon3}
Let $F\colon \mathcal A(\Omega)\to [0,+\infty]$ be a continuous and maxitive set function. 
If $A_n\in\mathcal A(\Omega)$ for every $n\in \mathbb N$ with $A_i\cap A_j=\emptyset$ for every $i,j\in\mathbb N$ such that $i\neq j$ then
\[
F\left(\bigcup_{n=1}^\infty A_n\right)=\max_{n\in\mathbb N} \{F(A_n)\}.
\]
\end{corollary}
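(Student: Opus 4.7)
The plan is to use Proposition~\ref{prop.mon2} to reduce the statement to showing that the supremum $S := \sup_n F(A_n) = F(\bigcup_n A_n)$ is actually attained, and then to exploit the upper semicontinuity afforded by \emph{continuity} (something Proposition~\ref{prop.mon2} does not use) to force attainment via a contradiction argument based on passing to tails of the disjoint family.

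Concretely, I would assume for contradiction that $F(A_n) < S$ for every $n \in \mathbb N$. The elementary but crucial observation is that removing finitely many terms from a sequence whose supremum is not attained does not change the supremum, so $\sup_{n > N} F(A_n) = S$ for every $N \in \mathbb N$. Applying Proposition~\ref{prop.mon2} to the (still pairwise disjoint) family $\{A_n\}_{n > N}$ and the open set $B_N := \bigcup_{n > N} A_n \in \mathcal A(\Omega)$ then yields $F(B_N) = S$ for every $N$. Provided I can show $B_N \to \emptyset$ in $\mathcal A(\Omega)$ in the Hausdorff topology, continuity of $F$ gives $F(\emptyset) = \lim_N F(B_N) = S$, while monotonicity (Proposition~\ref{prop.mon}) yields $F(\emptyset) \leq F(A_n) < S$, a contradiction. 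Hence $S$ must be attained, so $F(\bigcup_n A_n) = \max_n F(A_n)$.

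The step requiring genuine work, and the one I expect to be the main obstacle, is the Hausdorff convergence $B_N \to \emptyset$ in $\mathcal A(\Omega)$, equivalent to showing that the nondecreasing closed sets $\overline\Omega \setminus B_N$ converge to $\overline\Omega$ in Hausdorff distance. Note that Lemma~\ref{lem.mon} applies to nondecreasing \emph{open} sequences, so a small extra argument is needed on the closed side. Pairwise disjointness of the $A_n$ gives $\bigcap_N B_N = \emptyset$, hence $\bigcup_N (\overline\Omega \setminus B_N) = \overline\Omega$. Since $\overline\Omega \setminus B_N \subset \overline\Omega$, we have $\rho(\overline\Omega \setminus B_N, \overline\Omega) = 0$, so the only real task is $\rho(\overline\Omega, \overline\Omega \setminus B_N) \to 0$. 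I would prove this by covering the compact set $\overline\Omega$ by finitely many balls of radius $\varepsilon/2$ and noting that since $\bigcup_N (\overline\Omega \setminus B_N) = \overline\Omega$, each ball in the cover meets some $\overline\Omega \setminus B_{N_i}$; by the monotonicity of the sequence, from $N_0 := \max_i N_i$ onward every ball in the cover meets $\overline\Omega \setminus B_N$, whence $\rho(\overline\Omega, \overline\Omega \setminus B_N) \leq \varepsilon$. This uniform-convergence point is the only delicate step; everything else is a routine chaining of the previously established propositions.
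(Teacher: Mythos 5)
Your proof is correct, but it reaches the contradiction by a genuinely different mechanism than the paper. The paper also argues by contradiction and also ends by evaluating $F$ at $\emptyset$ via continuity, but it works with the individual sets: it extracts a subsequence with $F(A_{n_k})\to S$, uses pairwise disjointness and the boundedness of $\Omega$ to get $|A_{n_k}|\to 0$, invokes Blaschke compactness to obtain a Hausdorff limit $A$, and identifies $A=\emptyset$ through the lower semicontinuity of the Lebesgue measure; then $F(\emptyset)=S$ contradicts $F(\emptyset)\leq\inf_n F(A_n)<S$ exactly as in your last step. You instead pass to the tails $B_N=\bigcup_{n>N}A_n$, use Proposition~\ref{prop.mon2} a second time to get $F(B_N)=S$, and prove directly that $B_N$ Hausdorff-converges to $\emptyset$; here disjointness enters only through $\bigcap_N B_N=\emptyset$, and your covering argument for $\rho(\overline\Omega,\overline\Omega\setminus B_N)\to 0$ is sound (it is the decreasing-open-sets counterpart of Lemma~\ref{lem.mon}, and the monotonicity of the closed complements is what makes a single $N_0$ work for the whole finite cover). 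What each approach buys: yours avoids Blaschke compactness and any measure-theoretic identification of the limit, at the cost of the explicit convergence argument and an extra appeal to Proposition~\ref{prop.mon2}; the paper's is shorter on set-topological bookkeeping but leans on $|A|=0\Rightarrow A=\emptyset$ for open sets and on compactness of the Hausdorff topology. Both are valid, and both use continuity (rather than mere lower semicontinuity) only at the single point $\emptyset$.
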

\begin{proof}
If $F$ is constant then the result is true. Therefore, we may further assume $F$ to be not constant and proceed by contradiction. Assume that the supremum it is not a maximum, this means that there exists a subsequence (that we do not relabel) $\{A_n\}_n$ such that $\lim_{n\to \infty}F(A_n)=\sup_n F(A_n)$.
For this sequence we have $\cup_n A_n\subset \Omega$ and, since $\Omega$ is bounded and the sets $A_n$ are mutually disjoints, we deduce that $\lim_{n\to \infty}|A_n|=0$.
By Corollary~\ref{cor.bla}, up to a further subsequence (that we still do not relabel), $A_n$ is Hausdorff converging in $\mathcal A(\Omega)$ to an open set $A$ and by lower semicontinuity of the measure  \cite[Proposition 2.2.23]{HP} $|A|\leq \liminf_{n\to\infty} |A_n|$. This combined with the previous limit gives $|A|=0$, namely $A=\emptyset$ ($A$ being open). Therefore, by the continuity of $F$ we obtain $$F(\emptyset)=\lim_{n\to \infty}F(A_n)=\sup_{n\in\mathbb N} F(A_n).$$
Moreover, by Proposition~\ref{prop.mon} $F$ is monotone, thus $F(\emptyset)\leq \inf_{n\in\mathbb N} F(A_n).$ Combining these inequalities for $F(\emptyset)$ we deduce that $F$ is constant along the subsequence $\{A_n\}_n$ so that the supremum is a maximum, a contradiction.
\end{proof}

We are now ready for the last condition which is somehow a local maxitivity property on a disjoint union of a set with a (sufficiently small) ball.

\begin{definition}[Local maxitivity]
A set function $F\colon \mathcal A(\Omega)\to [0,+\infty]$ is \emph{locally maxitive} if for every $A\in\mathcal A(\Omega)\setminus\{\emptyset\}$ there exists $r_A>0$ such that for every $0<r<r_A$ and $x\in\Omega$ 
it holds 
\begin{equation}\label{eq.wm}
F(A_r(x)\cup {B_r^\Omega(x)})=F(A_r(x)),
\end{equation}
where $A_r(x):=A\setminus \overline{B_r(x)}$ and $B_r^\Omega(x):=B_r(x)\cap \Omega$.
\end{definition}

The intersection of the ball $B_r(x)$ with $\Omega$ is needed for $F$ to be well-defined (indeed such ball could be not entirely contained in $\Omega$). However, this condition is actually unnecessary for the set functions that we consider in this paper, as they are defined on all balls in $\mathbb R^2$. 
In the language of maxitive set function the condition \eqref{eq.wm} means that $F$ is maxitive on small balls with maximum value reached by the complement of the ball, as the radius is sufficiently small. The interest on this local maxitivity property, in view of the globlal one \eqref{eq.ma}, is due to the fact that many interesting functionals (such as those depending on the Dirichlet eigenvalues) in fact only verify this local version. To check local maxitivity from maxitivity sometimes it will be useful the following criteria.

\begin{proposition}[Maxitivity\&Local Maxitivity]\label{mimplywm}
Let $F\colon \mathcal A(\Omega)\to [0,+\infty]$ be a lower semicontinuous and maxitive set function, i.e., $F$ satisfies conditions \eqref{eq.ls} and \eqref{eq.ma}. Assume moreover that $F$ is
\begin{itemize}
\item[(i)] \emph{positive on balls}, that is $F(B)>0$ for every ball $B\in\mathcal A(\Omega)$;
\item[(ii)] \emph{continuous on shrinking balls}, that is whenever $\{B^\Omega_n\}_n$ is Hausdorff converging in $\mathcal A(\Omega)$ to $\emptyset$,
\begin{equation}\label{limit}
\lim_{n\to\infty} F(B_n^\Omega)=0,
\end{equation}
where $B_n^\Omega:=B_n\cap\Omega$ and $B_n$ is a ball.
\end{itemize}
Then $F$ is locally maxitive and satisfies \eqref{eq.wm}. 
\end{proposition}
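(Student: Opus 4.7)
The plan is to apply the (global) maxitivity \eqref{eq.ma} to the two disjoint open sets $A_r(x)$ and $B_r^\Omega(x)$ — disjoint because $A_r(x)\subset \mathbb R^2\setminus\overline{B_r(x)}$ while $B_r^\Omega(x)\subset\overline{B_r(x)}$ — so that
\[
F(A_r(x)\cup B_r^\Omega(x))=\max\{F(A_r(x)),F(B_r^\Omega(x))\}.
\]
Hence establishing \eqref{eq.wm} reduces to producing a threshold $r_A>0$, independent of $x\in\Omega$, such that $F(B_r^\Omega(x))\leq F(A_r(x))$ whenever $0<r<r_A$.

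For the upper bound on $F(B_r^\Omega(x))$ I would argue by contradiction: if $\sup_{x\in\Omega}F(B_r^\Omega(x))$ did not tend to $0$ as $r\to 0$, one could extract sequences $r_n\to 0$ and $x_n\in\Omega$ with $F(B_{r_n}^\Omega(x_n))\geq \varepsilon$ for some $\varepsilon>0$. Since the Hausdorff distance of $\overline\Omega\setminus B_{r_n}^\Omega(x_n)$ from $\overline\Omega$ is at most $r_n$, the open sets $B_{r_n}^\Omega(x_n)$ are Hausdorff converging to $\emptyset$ in $\mathcal A(\Omega)$, and hypothesis (ii) in \eqref{limit} would force $F(B_{r_n}^\Omega(x_n))\to 0$, a contradiction. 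So $F(B_r^\Omega(x))\to 0$ as $r\to 0$ uniformly in $x\in\Omega$.

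For the lower bound on $F(A_r(x))$ I would exploit a uniform-in-$x$ two-disjoint-balls trick. Since $A\neq \emptyset$ is open, pick a ball $B_\rho(y_0)\subset A$ and a unit vector $e$, and inside $B_\rho(y_0)$ place two disjoint balls
\[
B^+:=B_{\rho/4}\bigl(y_0+(\rho/2)e\bigr),\qquad B^-:=B_{\rho/4}\bigl(y_0-(\rho/2)e\bigr),
\]
whose closures are separated by distance $\rho/2$. For $r<\rho/4$ the closed ball $\overline{B_r(x)}$ has diameter $2r<\rho/2$ and thus meets at most one of $B^+$, $B^-$; the other, call it $B^\ast_x$, is entirely contained in $A_r(x)$. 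Monotonicity (Proposition~\ref{prop.mon}) together with positivity on balls, hypothesis (i), yields
\[
F(A_r(x))\geq F(B^\ast_x)\geq c:=\min\{F(B^+),F(B^-)\}>0.
\]

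Combining the two estimates, I would pick $r_A\in(0,\rho/4)$ small enough that $F(B_r^\Omega(x))<c$ for every $x\in\Omega$ and every $r<r_A$ (possible by the previous paragraph). Then the maximum above is realized by $F(A_r(x))$, which is exactly \eqref{eq.wm}. The main obstacle I expect is the uniform-in-$x$ lower bound on $F(A_r(x))$: a single ball placed inside $A$ could be completely swallowed by $\overline{B_r(x)}$ when $x$ sits near its center, so one genuinely needs two well-separated balls inside $A$ to guarantee that at least one survives the excision, independently of $x$.
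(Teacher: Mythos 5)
Your proof is correct, but it follows a genuinely different route from the paper's. The paper argues by contradiction: assuming local maxitivity fails along sequences $r_n\to 0$, $x_n\to x$, it uses maxitivity to get $F(A_{r_n}(x_n))<F(B^\Omega_{r_n}(x_n))$, then invokes the lower semicontinuity \eqref{eq.ls} along the Hausdorff-converging sequence $A_{r_n}(x_n)\to A\setminus\{x\}$ together with hypothesis (ii) to conclude $F(A\setminus\{x\})=0$, contradicting positivity on balls via monotonicity. You instead give a direct, quantitative argument: the same reduction via maxitivity to comparing $F(A_r(x))$ with $F(B_r^\Omega(x))$, a uniform-in-$x$ decay $\sup_{x}F(B_r^\Omega(x))\to 0$ from hypothesis (ii), and a uniform positive lower bound $F(A_r(x))\geq c>0$ obtained by placing two separated balls in $A$ so that at least one survives the excision of $\overline{B_r(x)}$ for every $x$. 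The notable payoff of your approach is that it never uses the lower semicontinuity hypothesis \eqref{eq.ls} (only maxitivity, which gives monotonicity via Proposition~\ref{prop.mon}, plus (i) and (ii)), so it proves a slightly stronger statement, and it produces an explicit threshold $r_A$; the paper's argument is shorter because the compactness and semicontinuity machinery absorbs the uniformity issue that your two-ball trick handles by hand. Your identification of the uniformity in $x$ as the real obstacle is exactly right, and your resolution of it is sound; the only cosmetic slip is that the Hausdorff distance of $\overline\Omega\setminus B_{r_n}^\Omega(x_n)$ from $\overline\Omega$ is bounded by $2r_n$ rather than $r_n$, which is immaterial.
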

\begin{proof}
Assume by contradiction that $F$ is not locally maxitive. Then there exists $A\in\mathcal A(\Omega)\setminus\{\emptyset\}$ such that for every $n\in\mathbb N$ there exist $0<r_n<1/n$ and $x_n\in\Omega$ 
such that $$F(A_{r_n}(x_n))<F(A_{r_n}(x_n)\cup B^\Omega_{r_n}(x_n))=F(B^\Omega_{r_n}(x_n)),$$
where the last equality holds since $F$ is maxitive. 
If $n\to\infty$ then $r_n\to 0$ and (up to subsequence that we do not relabel) $x_n\to x$ for some $x\in \overline\Omega$, in particular $\{A_{r_n}(x_n)\}_n$ and $\{B^\Omega_{r_n}(x_n)\}_n$ are Hausdorff converging in $\mathcal A(\Omega)$ to $A\setminus\{x\}$ and to $\emptyset$, respectively. Therefore, the lower semicontinuity of $F$ with the continuity of $F$ on shrinking balls yield
\[
0\leq F(A\setminus \{x\})\leq \liminf_{n\to\infty} F(A_{r_n}(x_n))\leq\liminf_{n\to\infty} F(B^\Omega_{r_n}(x_n))=0,
\]
implies $F(A\setminus\{x\})=0$. Now, since $A\setminus \{x\}$ is open and it not the empty set there exists a ball $B\in\mathcal A(\Omega)$ such that $B\subset A\setminus \{x\}$. Then the monotonicity and the strict positivity of $F$ on balls yield $0<F(B)\leq F(A\setminus\{x\})=0$, a contradiction.
\end{proof}

\section{The minimization problem}\label{sec.prob}

We prove the main theorems regarding the minimization problem \eqref{prob}.

\begin{proof}[Proof of Theorem~\ref{teo:ex}]
The existence of a minimizer follows the direct methods of the Calculus of Variations. 
Let
\[
m:=\inf\{ F(\Omega\setminus \Sigma): \Sigma\in\mathcal C(\overline \Omega) \text{ connected  with } \haus(\Sigma)=L\}.
\]
Without loss of generality we may assume $m<+\infty$, otherwise $F$ would be constant and the existence of a minimizer would be trivial. Moreover, notice that $m\geq 0$ thanks to the positivity of $F$. Now, consider a minimizing sequence $\{\Sigma_n\}_n$ in $\mathcal C(\overline \Omega)$ made up of connected sets with $\haus(\Sigma_n)=L$ such that (up to subsequences, that we do not relabel)
\[
\lim_{n\to\infty} F(\Omega \setminus \Sigma_n)= m.
\] 
Again, up to subsequences (that we do not relabel), from Blaschke Theorem (Theorem~\ref{teo.b}) there exists a closed set $\Sigma$ such that $\mathrm d_H(\Sigma_n,\Sigma)\to 0$ as $n\to\infty$. Moreover, Go\l ab theorem~\ref{teo.g} guarantees that $\Sigma$ is connected with $\haus(\Sigma)\leq L$. 
By Lemma~\ref{lemma2} it is possible to extend the set $\Sigma$ to another one $\Sigma'\in \mathcal C(\overline \Omega)$, still connected, and such that $\haus(\Sigma') = L$ (if $\haus(\Sigma)$ were already equal to $L$ just set $\Sigma'=\Sigma$). 
Therefore, $\Omega\setminus \Sigma'\subset \Omega \setminus \Sigma$ and by using the monotonicity of $F$ in \eqref{eq.mo} together with the lower semicontinuity of $F$ in \eqref{eq.ls}, we obtain that
\[
F(\Omega\setminus \Sigma')\leq F(\Omega \setminus \Sigma)\leq \liminf_{n\to\infty} F(\Omega\setminus \Sigma_n)=m,
\]
thus implying the minimality of $\Sigma'$.
\end{proof}

Due to the equality constraint in the minimization problem \eqref{prob}, it is necessary to work with monotone set functions. Without this assumption, according to the Go\l ab theorem, one would only establish the existence of a minimizer within the broader class of continua of $\mathcal C(\overline\Omega)$ with a length (1-dimensional Hausdorff measure) less or equal to $L$.

By employing refined versions of the monotonicity property, those introduced in the previous section, we can establish the Ahlfors regularity.

\begin{proof}[Proof of Theorem~\ref{teo:reg}]
Given a minimizer $\Sigma$ for \eqref{prob} let $$r_0:=\min\{\text{diam($\Sigma$)}/2, r_{\Omega\setminus\Sigma}\}$$ with $r_{\Omega\setminus\Sigma}>0$ the value of local maxitivity of $F$ such that \eqref{eq.wm} holds and $A=\Omega\setminus \Sigma$.
Fix $0<r<r_0$ and $x\in \Sigma$ to denote $B$ the open ball of radius $r$ centered at $x$. The lower bound easily follows from the connectedness of $\Sigma$, see \cite[Theorem~4.4.5]{ambtil}. Indeed, since $0<r<\text{diam($\Sigma$)}/2$ the set $\Sigma\cap \partial B$ is not empty (otherwise $\Sigma$ would not be connected). Therefore, there exists $x\in \Sigma\cap \partial B$ so that $\haus(\Sigma\cap B)\geq r$.

To prove the upper bound, we consider a new continuum $\Sigma'$ just defined by replacing the part of $\Sigma$ inside the ball $B$ (centered at $x$ and of radius $r$) with the boundary of this ball, that is we define
\[
\Sigma':=\left(\Sigma\setminus \overline{B}\right)\cup \partial B.
\] 
Since $0<r<\text{diam($\Sigma$)}$ the set $\partial B\cap \Sigma$ is not empty and the resulting compact set $\Sigma'$ is also connected (indeed each connected components of $\Sigma\setminus \overline{B}$ must intersect $\partial B$, otherwise $\Sigma$ would not be connected).
By using the definition of $\Sigma'$ and of the difference of sets we have
\[
\begin{split}
\Omega\setminus \Sigma'&=\Omega\cap\left(\Sigma\setminus \overline{B}\right)^c\cap(\overline{B}^c\cup B)=\Omega\cap\left(\Sigma^c\cup \overline{B}\right)\cap  (\overline{B}^c\cup B)\\
&=(\Omega\cap \Sigma^c\cap \overline{B}^c)\cup(\Omega\cap \left(\Sigma^c\cup \overline{B}\right)\cap B) =\left((\Omega\setminus \Sigma)\setminus \overline{B}\right)\cup (\Omega\cap B),
\end{split}
\]
and thus
\begin{equation}\label{dim11}
F(\Omega\setminus \Sigma')=F\left(\left((\Omega\setminus \Sigma)\setminus \overline{B}\right)\cup (\Omega\cap B)\right).
\end{equation}
Moreover, since also $r<r_{\Omega\setminus\Sigma}$, then \eqref{eq.wm} holds and we have 
\begin{equation}\label{dim22}
F\left(\left((\Omega\setminus \Sigma)\setminus \overline{B}\right)\cup (\Omega\cap B)\right)=F(\left((\Omega\setminus \Sigma)\setminus \overline{B}\right)).
\end{equation}
Now, we can combine the two equalities \eqref{dim11} and \eqref{dim22} with the monotonicity of $F$ \eqref{eq.sm}, since clearly $(\Omega\setminus \Sigma)\setminus \overline{B}\subset \Omega\setminus \Sigma$, to obtain that   
\[
F(\Omega\setminus \Sigma')\leq F(\Omega\setminus \Sigma).
\]
However, due to the strict monotonicity on enlargements of $F$ in \eqref{eq.sm}, this inequality implies that $\haus(\Sigma')\geq L$, since otherwise thanks to Lemma~\ref{lemma2} we could find an enlargement $\Sigma''$ of $\Sigma'$ with length $L$ that, by the strict monotonicity of $F$ \eqref{eq.sm}, would contradict the minimality of $\Sigma$. 
Therefore,
\[
L\leq \haus(\Sigma')=\haus(\Sigma)-\haus(\Sigma\cap \overline B)+2\pi r= L-\haus(\Sigma\cap \overline B)+2\pi r
\]
which gives the desired density estimate.
\end{proof}

\begin{remark}\label{relax}
The assumptions required on $F$ in Theorems~\ref{teo:ex} and \ref{teo:reg} can of course be relaxed to hold merely on open sets of the form $\Omega\setminus \Sigma$.
\end{remark}

\section{Some applications}\label{sec.app}

We explore a variety of set functions that both motivate and align with all the previously abstract definitions, such as maximum of distances and of torsion functions, linear and nonlinear spectral eigenvalues. By Remark~\ref{relax} it will be sufficient (and sometimes we will do so) to verify the conditions required by Theorems~\ref{teo:ex} and \ref{teo:reg} only for open sets $A=\Omega\setminus \Sigma$ where $\Sigma$ is a continuum of fixed length.

\subsection{Maximum distance problems.} 

We prove the first application to maximum distance problems.
\begin{proof}[Proof of Corollary~\ref{cor.mdp}]
Apply Theorems~\ref{teo:ex} and \ref{teo:reg} to the inradius $R$, which on sets of the form $\Omega\setminus\Sigma\in\mathcal A(\Omega)$ with $\Sigma$ continuum is defined as
$$R(\Omega\setminus \Sigma):=\max_{x\in \overline \Omega} \mathrm{d}(x,\partial \Omega\cup\Sigma).$$
\begin{itemize}
\item[(i)]   
By \eqref{supdist} the Hausdorff convergence of $\partial \Omega\cup\Sigma_n$ to $\partial \Omega\cup\Sigma$ as $n\to\infty$ is equivalent to the uniform convergence of the corresponding distance functions. Since uniform convergence implies pointwise convergence then for $x_0$ such that $R(\Omega\setminus \Sigma)=\mathrm{d}(x_0,\partial \Omega\cup\Sigma)$ we have
\[
R(\Omega\setminus \Sigma)=\mathrm{d}(x_0,\partial \Omega\cup\Sigma)= \lim_{n\to\infty} \mathrm{d}(x_0,\partial \Omega\cup\Sigma_n).
\]
Then, by definition of $R$, $\mathrm{d}(x_0,\partial \Omega\cup\Sigma_n)\leq R(\Omega\setminus\Sigma_n)$ which together with the previous equality provides the lower semicontinuity of $R$.
\item[(ii)] The strict monotonicity \eqref{eq.sm} of $R$ is contained in \cite[Theorem 3.7]{mipast}.
\item[(iii)] The local maxitivity of $R$ follows from Proposition~\ref{mimplywm}. Indeed, the lower semicontinuity has been already proved in (i) and clearly $R$ is a maxitive set function that satisfies \eqref{eq.ma} (the largest ball that can be inscribed in $A_1\cup A_2$ is also the largest one that can be inscribed either in $A_1$ or in $A_2$ and viceversa). Moreover, $R(B_r)=r>0$ for all ball $B_r$ or radius $r>0$ which together with the monotonicity of $F$ yields \eqref{limit}. 
\end{itemize}
The corollary is proved.
\end{proof}

The examination of supremal norms for other functions can unveil new and significant applications. We explore a specific scenario in the next example, focusing on the torsion function.

\subsection{Torsion function.} For an open set $A\in\mathcal A(\Omega)$ consider \emph{the torsion function} $w_A$ maximum of the
\emph{torsional rigidity}
\begin{equation}\label{torsion}
T(A):=\max_{u\in H_0^{1}(A)\atop u\neq 0}\frac{\left(\int_A u(x)\, dx\right)^{2}}{\int_A |\nabla u(x)|^2\, dx},
\end{equation}
normalized to be the unique solution of the Euler-Lagrange equation (also known, in elasticity theory, as the \emph{de Saint-Venant problem})
\[
\begin{cases}
-\Delta w_A=1 \qquad &\text{on $A$}\\
w_A=0 \qquad &\text{on $\partial A$}. 
\end{cases}
\]
It is well-known (see, e.g., \cite[Sections 6.3 and 6.4]{eva}) that $w_A\in C^\infty(A)$ and that, by the strong maximum principle, $w_A$ reaches its minimum at the boundary $\partial A$; $w_A$ is therefore positive inside $A$. Moreover, by the comparison principle, one may compare $w_A$ with $w_B$ for some ball $B$ that contains $A$, so that the maximum of $w_A$ is bounded by the one of $w_B$, i.e. $M(A)\leq  M(B)=R^2/4$, and thus $M(A)$ is finite (in fact it only depends on the diameter of $A$). In this section, we focus on the set function $F$ that is the \emph{maximum of the torsion function} in $A\in\mathcal A(\Omega)$:
\[
M(A)=\max_{x\in A} w_A(x).
\]
Notice that the torsional rigidity \eqref{torsion} corresponds to the $L^1$ norm of $w_A$ on $A$ and it is an additive (thus not maxitive) set function. In particular, the torsional rigidity fails to satisfy the local maxitivity condition \eqref{eq.wm}. The Ahlfors regularity of the minimizers of the torsional rigidity has been established in \cite[Section 4.2]{clls} through a careful analysis of this particular set function.

\begin{corollary}[Maximum of torsion functions]\label{brown}
\!There exists a minimizer of
\[
\min\left\{M(\Omega\setminus \Sigma): \Sigma\in\mathcal C(\overline \Omega) \text{ connected  with } \haus(\Sigma)=L\right\}
\]
and every minimizer is Ahlfors regular.
\end{corollary}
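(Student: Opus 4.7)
The plan is to apply Theorems~\ref{teo:ex} and \ref{teo:reg} to $F=M$, verifying the three hypotheses on open sets of the form $\Omega\setminus\Sigma$ with $\Sigma\in\mathcal{C}(\overline{\Omega})$ a continuum, which by Remark~\ref{relax} is enough.

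For the lower semicontinuity \eqref{eq.ls}, I would invoke the classical two-dimensional stability theorem of \v{S}ver\'ak: since the continua $\Sigma_n$ are connected (so the number of connected components of the obstacle is uniformly one), Hausdorff convergence $\Sigma_n\to\Sigma$ upgrades automatically to $\gamma$-convergence of $\Omega\setminus\Sigma_n$ to $\Omega\setminus\Sigma$. This gives strong $H^1_0(\Omega)$ convergence of the torsion functions $w_{\Omega\setminus\Sigma_n}\to w_{\Omega\setminus\Sigma}$ (extended by zero), which by interior elliptic regularity becomes locally uniform on $\Omega\setminus\Sigma$. Choosing $x^{*}$ where $w_{\Omega\setminus\Sigma}$ attains its maximum, one has $x^{*}\in\Omega\setminus\Sigma_n$ for all large $n$ by Hausdorff convergence of the complements, so $\liminf_n M(\Omega\setminus\Sigma_n)\geq \lim_n w_{\Omega\setminus\Sigma_n}(x^{*})=M(\Omega\setminus\Sigma)$.

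For the strict monotonicity on enlargements \eqref{eq.sm}, let $\Sigma'$ be an enlargement of $\Sigma$. The Saint-Venant-type bound $M(A)\leq |A|/(4\pi)$, obtained from Schwarz symmetrization against the explicit formula $M(B_r)=r^2/4$, combined with $\sum_i |A'_i|\leq |\Omega|<\infty$, ensures that the supremum $M(\Omega\setminus\Sigma')=\sup_i M(A'_i)$ over the at most countably many components $A'_i$ of $\Omega\setminus\Sigma'$ is actually attained on some component $A'_{i^{*}}$. By the enlargement property, $A'_{i^{*}}\subsetneq A_{i^{*}}$ for some component $A_{i^{*}}$ of $\Omega\setminus\Sigma$. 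On $A'_{i^{*}}$ the function $w_{A_{i^{*}}}-w_{A'_{i^{*}}}$ is harmonic (both solve $-\Delta w=1$), non-negative on $\partial A'_{i^{*}}$, and strictly positive at any point of $\partial A'_{i^{*}}\cap A_{i^{*}}$, which is non-empty since $A'_{i^{*}}\subsetneq A_{i^{*}}$; the strong maximum principle then yields $w_{A'_{i^{*}}}<w_{A_{i^{*}}}$ throughout $A'_{i^{*}}$, hence $M(\Omega\setminus\Sigma')=M(A'_{i^{*}})<M(A_{i^{*}})\leq M(\Omega\setminus\Sigma)$.

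For the local maxitivity \eqref{eq.wm} I would appeal to Proposition~\ref{mimplywm}: maxitivity \eqref{eq.ma} is immediate because the torsion equation decouples on a disjoint union of open sets, so $M(A_1\cup A_2)=\max\{M(A_1),M(A_2)\}$ whenever $A_1\cap A_2=\emptyset$; positivity on balls is $M(B_r)=r^2/4>0$; and continuity on shrinking balls follows from $M(B_r\cap\Omega)\leq M(B_r)=r^2/4\to 0$ by monotonicity. The main anticipated obstacle is the strict monotonicity step: without the Saint-Venant bound it would not be clear that the supremum defining $M(\Omega\setminus\Sigma')$ is attained on a single component, and the strong maximum principle could then yield only a non-strict inequality.
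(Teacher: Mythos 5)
Your proposal is correct and takes essentially the same route as the paper: both verify hypotheses (i)--(iii) of Theorems~\ref{teo:ex} and \ref{teo:reg} for $M$ via \v{S}ver\'ak's continuity result, the strong maximum principle applied to the harmonic difference of torsion functions on an enlargement, and Proposition~\ref{mimplywm} together with the decoupling $w_{A_1\cup A_2}=w_{A_1}+w_{A_2}$ and $M(B_r)=r^2/4$. The only deviations are minor: for (i) you upgrade to locally uniform convergence and evaluate at the maximum point, whereas the paper uses weak-$*$ $L^\infty$ lower semicontinuity (and itself notes the uniform-convergence alternative), and your Saint-Venant bound guaranteeing that the supremum over the components of $\Omega\setminus\Sigma'$ is attained is a careful extra step that the paper implicitly takes for granted.
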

\begin{proof} 
Apply Theorems~\ref{teo:ex} and \ref{teo:reg} to $M$.
\begin{itemize}
\item[(i)] If $\{\Sigma_n\}_n$ Hausdorff converges in $\mathcal C(\overline \Omega)$ to $\Sigma$ then by \cite{sve} (see also \cite[Theorem 3.4.14]{HP}) $w_{\Omega\setminus\Sigma_n}$ converges in $H^1(\Omega)$ to $w_{\Omega\setminus\Sigma}$. The function $w_\Omega-w_{\Omega\setminus\Sigma_n}$ is harmonic in $\Omega\setminus \Sigma_n$ and non-negative on $\partial \Omega\cup \Sigma_n$. By the maximum principle $w_\Omega-w_{\Omega\setminus\Sigma_n}$ must be non-negative in $\Omega\setminus \Sigma_n$, thus $w_{\Omega\setminus \Sigma_n}(x)\leq w_\Omega(x)$ for all $x\in\Omega$. This in particular implies that $w_{\Omega\setminus \Sigma_n}$ is uniformly bounded in $L^\infty(\Omega)$. Up to subsequences (that we do not relabel), $w_{\Omega\setminus\Sigma_n}$ also converges in a weak-* $L^\infty(\Omega)$ sense to $w_{\Omega\setminus\Sigma}$ and the lower semicontinuity of the $L^\infty$-norm w.r.to this convergence yields 
\[
M(\Omega\setminus \Sigma)=\|w_{\Omega\setminus\Sigma}\|_\infty \leq \liminf_{n\to\infty} \|w_{\Omega\setminus\Sigma_n}\|_\infty=M(\Omega\setminus \Sigma_n).
\]
This proves \eqref{eq.ls} for $M$. The lower semicontinuity of $M$ can also be derived from the uniform convergence  of  $w_{\Omega\setminus\Sigma_n}$  to $w_{\Omega\setminus\Sigma}$, see \cite[Proposition~3.6.1]{HP}. 
\item[(ii)]  Let $\Sigma'$ be an enlargement of $\Sigma$. Since $w_{\Omega\setminus\Sigma}-w_{\Omega\setminus\Sigma'}$ is harmonic in $\Omega\setminus \Sigma'$ and non negative on $\partial \Omega\cup \Sigma'$ by the strong maximum principle $w_{\Omega\setminus\Sigma}-w_{\Omega\setminus\Sigma'}$ must be positive in $\Omega\setminus \Sigma'$, namely  $w_{\Omega\setminus \Sigma'}(x)< w_{\Omega\setminus \Sigma}(x)$ for every $x\in\Omega\setminus \Sigma'$. This strict inequality holds in particular for $x'\in\Omega\setminus \Sigma'$ the maximum of $w_{\Omega\setminus\Sigma'}$ and then 
$$M(\Omega\setminus \Sigma')=w_{\Omega\setminus\Sigma}(x')<w_{\Omega\setminus\Sigma}(x')\leq M(\Omega\setminus \Sigma),$$
gives \eqref{eq.sm}. 
\item[(iii)] The local maxitivity of $M$ follows from Proposition~\ref{mimplywm}. Indeed, the lower semicontinuity has been already proved in (i) while the maxitivity of $w_A$ follows from the linearity of the equation which implies that $w_{A_1\cup A_2}=w_{A_1}+w_{A_2}$. Moreover, $M(B_r)=r^2/4>0$ for every ball $B_r$ or radius $r>0$ which together with the monotonicity of $F$ gives \eqref{limit}.
\end{itemize}
The corollary is proved.
\end{proof}

The function $w_{\Omega\setminus \Sigma}$ finds utility in different fields. For instance $w_{\Omega\setminus\Sigma}(x)$ can be interpreted as the \emph{expected first exit time} from $\Omega\setminus \Sigma$ of the \emph{Brownian motion} started at the point $x$ and $M(\Omega\setminus\Sigma)$ is, of course, the maximum of this expected first exit time from $\Omega\setminus \Sigma$. Therefore, in the minimization problem  of Corollary~\ref{brown} one is interested in determining the best location and shape of $\Sigma$ to be placed inside $\Omega$ so as the maximum expected first exit time from $\Omega\setminus\Sigma$ is minimum.

In elasticity theory, the torsion function $w_{\Omega\setminus\Sigma}$ represents the deflection of a membrane $\Omega\setminus \Sigma$ fixed along its boundary $\partial \Omega\cup\Sigma$ and subjected to a uniform load $f=1$. Minimizing the maximal deflection $M(\Omega\setminus\Sigma)$ gives a better and more resilient structure \cite{zuc}. Another classical quantity used in elasticity theory (see \cite{hosste}) is \emph{the maximum shear stress} 
\[
\tau(A)=\max_{x\in A} |\nabla w_A(x)|.
\]
It could be interesting to understand if it fits under this framework (the monotonicity is unclear).

We also mention that $M(\Omega\setminus\Sigma)$ has important applications to efficiency and localization properties of the eigenfunctions, see for instance \cite{bebuka} and references therein.

\subsection{Spectral optimization.}  
One may also push the previous analysis to elliptic partial differential operators with non-constant coefficients: we analyze this topic in this section.
Let $\sigma$ be a $2\times 2$ symmetric matrix-valued measurable function defined over $\Omega$ such that
\begin{equation*}
\sigma_1|\xi|^2\leq \langle \sigma(x)\xi,\xi\rangle\leq \sigma_2|\xi|^2, \quad \text{for all $x\in \Omega$, $\xi\in \mathbb R^2$,}
\end{equation*}
for suitable constants $\sigma_1,\sigma_2>0$ and $V,\rho$ be measurable functions such that $$\rho_1\leq \rho(x) \leq \rho_2\quad  \text{and}\quad 0\leq V(x)\leq V_2 \quad \text{for all $x\in \Omega$,}$$
for suitable constants $\rho_1,\rho_2, V_2>0$ (the positivity of $V$ is just required to deal with positive set functions, but it is not necessary, see \cite[Remark 1.1.3]{hen}).
From these assumptions, given $A\in\mathcal(\Omega)$, it is well-known (see \cite[Section 6.5]{eva} and \cite[Theorem~1.2.2]{hen}) the existence of countably many numbers $\{\lambda_j\}_j$, called \emph{the eigenvalues}, depending on $A$, which provide existence of solutions $\{u_j\}_j$, called \emph{the eigenfunctions}, for the following boundary value problem:
\[
\begin{cases}
-\mathrm{div}(\sigma(x)\nabla u_j(x))+V(x)u_j(x)=\lambda_j \rho(x) u_j(x), \quad &\text{in $A$}\\
u_j=0,\quad  &\text{on $\partial A$}.
\end{cases}
\] 
The variational characterization of these eigenvalues (also known as formula of Courant, Fisher, and Weyl) enables us to express the $j$-th eigenvalue $\lambda_j(A)$ of the open subset $A$ of $\Omega$ as follows:
\begin{equation}\label{eigenvalue}
\lambda_j(A):=\min_{E_j\subset H_0^1(A)}\max_{u\in E_j\atop u\neq 0}\frac{\int_A \langle \sigma(x) \nabla u(x),\nabla u(x)\rangle+V(x)u(x)^2dx}{\int_A \rho(x) u(x)^2dx}
\end{equation}
where the minimum is over all $j$-dimensional subspaces $E_j$ of $H_0^1(A)$. When $\sigma$ is the identity matrix, $\rho=1$, and $V=0$ then we will denote $\lambda_j$ with $\lambda_j^D$ to emphasize that it reduces to the  $j$-th eigenvalue of the Dirichlet-Laplacian. Clearly for every $A\in\mathcal A(\Omega)$ we have that 
\begin{equation}\label{ede}
\frac{\sigma_1}{\rho_2}\lambda_j^D(A)\leq \lambda_j(A)\leq \frac{\sigma_2}{\rho_1}\lambda_j^D(A)+\frac{V_2}{\rho_1}.
\end{equation}
We also recall that the $j$-th eigenvalue of the Dirichlet-Laplacian of a ball of radius $r$ and center $x$ is 
\begin{equation}\label{ball}
\lambda_j^D(B_r(x))=\frac{j_{n,m}^2}{r^2},
\end{equation}
for suitables $n,m\in\mathbb N$, $n\geq 0$, $m>0$, depending on the index $j$, where $j_{n,m}$ is the $m$-th zero of the Bessel function $J_n$. 
\begin{proof}[Proof of Corollary~\ref{spectra}]
Apply Theorems~\ref{teo:ex} and \ref{teo:reg} to $F=f(\lambda_1, \dots, \lambda_k)$ where $k\in\mathbb N$ and $f$ are given.
\begin{itemize}
\item[(i)] The classical \v{S}ver\'ak's $\gamma$-convergence result \cite{sve} (see also \cite[Theorem~3.4.14 and Remark~3.5.2]{HP}) ensures continuity of the eigenvalues for the Hausdorff convergence. This combined with the lower semicontinuity of $f$ yields the lower semicontinuity of $F$ stated in \eqref{eq.ls}.
\item[(ii)] Let $\Sigma'$ be the enlargement of $\Sigma$ given by Lemma~\ref{lemma2}. Each connected component of $\Omega\setminus \Sigma$ has been shrunk by a set of positive one dimensional Hausdorff measure, thus by a set of positive capacity. By strict monotonicity of the eigenvalues with respect to set inclusion $\lambda_j(\Omega\setminus \Sigma')<\lambda_j(\Omega\setminus \Sigma)$ for all $j$ and \eqref{eq.sm} follows (in fact this holds for every enlargement and not only for the one given in Lemma~\ref{lemma2}). This with the assumption that $f$ is strictly monotone yields the strict monotonicity of $F$. 
\item[(iii)] Since in general $\lambda_j$ is not maxitive, Proposition~\ref{mimplywm} cannot be applied to establish the local maxitivity condition. Therefore, we will directly verify that every eigenvalue satisfies the definition of local maxitivity. Fix a set $A\in\mathcal A(\Omega)\setminus\{\emptyset\}$ with a bounded number of connected components of
the complements inside $\Omega$ (by Remark~\ref{relax} it suffices $A=\Omega\setminus \Sigma$ with $\Sigma$ continuum). We first prove a lower bound for $\lambda_1$, then an upper bound on $\lambda_k$, and combine them to find a value $r_A$ for the validity of \eqref{eq.wm}.  
The monotonicity of $\lambda_1$ with the lower bound in \eqref{ede} and the fact in \eqref{ball} that $\lambda_1(B_r(x))=j_{0,1}^2/r^2$, yield 
\begin{equation}\label{lower}
\lambda_1(B_r^\Omega(x))\geq \lambda_1(B_r(x))\geq \frac{\sigma_1}{\rho_2}\lambda_1^D(B_r(x)) =\frac{c_1^2}{r^2},
\end{equation}
where $c_1^2:=\sigma_1j_{0,1}^2/\rho_2$, $r>0$ and $x\in\overline \Omega$ (if $B_r^\Omega(x)$ is empty the inequality is trivial as the left-hand side is to be interpreted $+\infty$). 
On the other hand, let $B_R$ be the largest ball inscribed in $A$ with $R=R(A)$ the inradius of $A$. Since $B_R\subset A$, the monotonicity of $\lambda_k^D$ with \eqref{ball} yields 
\begin{equation}\label{fund}
\lambda^D_k(A)\leq \frac{j_{n,m}^2}{R^2}.
\end{equation}
Moreover, let $x\in\overline\Omega$,
since $\{A_{r}(x)\}_r$ is Hausdorff converging in $\mathcal A(\Omega)$ to $A\setminus\{x\}$ as $r\to 0^+$, the continuity of $\lambda_k^D$ w.r.to the Hausdorff convergence, see \cite[Theorem 3.4.14]{HP}, implies that $\lambda_k^D(A_r(x))$ converges to $\lambda_k^D(A)$ as $r\to0^+$ (in two dimensions the point $x$ has zero capacity \cite[page 102]{HP} so that $A\setminus \{x\}$ and $A$ have same value of $\lambda_k$).
In particular, there exists $r_{2}>0$ such that for every $0<r<r_2$ and $x\in\overline \Omega$ there holds $\lambda_k^D(A_r(x))\leq 2 \lambda_k^D(A)$. From this inequality and the upper bound in \eqref{ede} it follows that 
\begin{equation}\label{dim33}
\lambda_k(A_r(x))\leq \frac{\sigma_2}{\rho_1}\lambda_k^D(A_r(x))+\frac{V_2}{\rho_1}\leq  2\frac{\sigma_2}{\rho_1}\lambda_k^D(A)+\frac{V_2}{\rho_1},
\end{equation}
which combined with \eqref{fund} yields
\begin{equation}\label{upper}
\lambda_k(A_r(x))\leq 2\frac{\sigma_2}{\rho_1}\frac{j_{n,m}^2}{R^2}+\frac{V_2}{\rho_1}:=c_2^2,
\end{equation}
with $0<r<r_2$ and $x\in\overline\Omega$.
Now, let
$$
r_A:=\min\big\{c_1/c_2, r_{2}\big\}.
$$
For every $x\in \overline \Omega$ and $0<r< r_A$ by definition $c_2< c_1/r$ and thus from \eqref{lower} and \eqref{upper} we deduce that $\lambda_k(A_r(x))<\lambda_1(B_r^\Omega(x))$. Since, the eigenvalues of a set are the ordered collection of the eigenvalues of its connected components, for every $1\leq j\leq k$ we have $$\lambda_j(A_r(x)\cup B_r(x))=\lambda_j(A_r(x)).$$ This gives \eqref{eq.wm}, which clearly holds also for $F=f(\lambda_1,\dots, \lambda_k)$. 
\end{itemize}
The corollary is proved. 
\end{proof}

\subsection{Best constants in Poincaré-Sobolev inequalities.} Other eigenvalues, including \emph{nonlinear eigenvalues} in the \emph{homogeneous and superhomogeneous} cases, fit also into our framework. It is more convenient to define them as reciprocals to deal with \emph{optimal Poincaré-Sobolev constants} appearing in the embedding of $W^{1,p}(A)$ into $L^q(A)$ for $A\in\mathcal A(\Omega)$. More precisely, let $p^*$ be the \emph{conjugate exponent} of $p$, that is $p^*:=2p/(2-p)$ if $p<2$ and $p^*:=+\infty$ if $p\geq 2$. For $1<p,q<p^*$ consider the Poincaré-Sobolev constant    
\begin{equation}\label{PS}
C_{p,q}(A):=\max_{u\in W_0^{1,p}(A)\atop u\neq 0}\frac{\left(\int_A u(x)^q\, dx\right)^{p/q}}{\int_A |\nabla u(x)|^p\, dx}.
\end{equation}
The existence of a maximizer is well-known and follows from the direct methods of the Calculus of Variations. This constant is a nonlinear version of the reciprocal first eigenvalue \eqref{eigenvalue} with $k=1$, $\sigma$ be the identity matrix, $\rho=1$, and $V=0$. We prove the lower semicontinuity w.r.to Hausdorff convergence.

\begin{lemma}[Lower semicontinuity of Poincaré-Sobolev constants]\label{ls.PS}
If $\{A_n\}_n$ is Hausdorff converging in $\mathcal A(\Omega)$ to $A$ then
\begin{equation}\label{eq.lsPS}
\liminf_{n\to\infty} C_{p,q}(A_n)\geq C_{p,q}(A).
\end{equation}
\end{lemma}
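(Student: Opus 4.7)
The plan is a standard test-function/density argument exploiting the variational definition \eqref{PS}. If $A=\emptyset$ the claim is trivial, so assume $A\neq\emptyset$ and fix an arbitrary nonzero $u\in W_0^{1,p}(A)$. By the very definition of $W_0^{1,p}(A)$ as the $W^{1,p}$-closure of $C_c^\infty(A)$, I would choose a sequence $\{u_\varepsilon\}_\varepsilon\subset C_c^\infty(A)$ converging to $u$ strongly in $W^{1,p}$, the plan being to use each $u_\varepsilon$ as a test function also for $C_{p,q}(A_n)$ once $n$ is large enough.

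The key geometric step is to check that the support $K_\varepsilon:=\mathrm{supp}(u_\varepsilon)$, a compact subset of the open set $A$, is eventually contained in $A_n$. By definition, the Hausdorff convergence $A_n\to A$ of open sets is the Hausdorff convergence $\overline{\Omega}\setminus A_n\to \overline{\Omega}\setminus A$ of their closed complements; since $\delta:=\mathrm{dist}(K_\varepsilon,\overline{\Omega}\setminus A)>0$, for $n$ large with $\mathrm{d_H}(\overline{\Omega}\setminus A_n,\overline{\Omega}\setminus A)<\delta/2$ every point of $\overline{\Omega}\setminus A_n$ lies within $\delta/2$ from $\overline{\Omega}\setminus A$, hence at distance at least $\delta/2$ from $K_\varepsilon$, so $K_\varepsilon\subset A_n$. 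Thus $u_\varepsilon$, extended by zero, belongs to $W_0^{1,p}(A_n)$ and is admissible in \eqref{PS}.

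Inserting $u_\varepsilon$ in \eqref{PS} (noting that its integrals are identical on $A$ and $A_n$) yields
$$
C_{p,q}(A_n)\geq \frac{\left(\int_{A} u_\varepsilon^q\,dx\right)^{p/q}}{\int_{A} |\nabla u_\varepsilon|^p\,dx}.
$$
Taking $\liminf_{n\to\infty}$ on the left and then letting $\varepsilon\to 0$ on the right—using the continuous Sobolev embedding $W^{1,p}\hookrightarrow L^q$, which applies since $q<p^*$, together with the strong $W^{1,p}$-convergence $u_\varepsilon\to u$, and observing that $u\neq 0$ keeps the denominator bounded away from zero in the limit—gives the lower semicontinuity inequality for this particular $u$. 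Taking the supremum over $u\in W_0^{1,p}(A)\setminus\{0\}$ then delivers \eqref{eq.lsPS}. There is no real obstacle to expect: the only substantive ingredient is the geometric observation that compact subsets of $A$ sit inside $A_n$ for large $n$, which follows immediately from the definition of Hausdorff convergence of open sets via the complements.
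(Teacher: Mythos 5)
Your proposal is correct and follows essentially the same route as the paper: a density/test-function argument in which a smooth compactly supported function in $A$ is shown, via the Hausdorff convergence of the complements, to be admissible for $C_{p,q}(A_n)$ for all large $n$, after which one passes to the limit in the Rayleigh quotient. The only cosmetic difference is that the paper directly selects an $\epsilon$-near-maximizer $\phi\in C_c^\infty(A)$, whereas you approximate an arbitrary $u\in W_0^{1,p}(A)$ and then take a supremum; your explicit verification that compact subsets of $A$ eventually lie in $A_n$ is a welcome detail the paper leaves implicit.
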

\begin{proof}
Fix $\epsilon>0$ from \eqref{PS} and the density of $C_c^\infty(A)$ in $H^1_0(A)$ there exists an admissible function $\phi\in C_c^\infty(A)$ for $C_{p,q}(A)$, such that 
\begin{equation}\label{dim555}
C_{p,q}(A)-\epsilon<\frac{\left(\int_A \phi(x)^q\, dx\right)^{p/q}}{\int_A |\nabla \phi(x)|^p\, dx}.
\end{equation}
Now, let $K$ be the support of the function $\phi$. Since $\{A_n\}_n$ is Hausdorff converging in $\mathcal A(\Omega)$ to $A$, then for $n$ large enough $A_n\supset K$, so that $\phi$ becomes also an admissible function for $C_{p,q}(A_n)$ and by using again \eqref{PS} 
\[
\frac{\left(\int_A u(x)^q\, dx\right)^{p/q}}{\int_A |\nabla u(x)|^p\, dx}=\frac{(\int_{A_n} u(x)^q\, dx)^{p/q}}{\int_{A_n} |\nabla u(x)|^p\, dx}\leq C_{p,q}(A_n),
\]
where the first equality follows from the fact that $K$ is contained in $A_n\cap A$.
Combining this inequality with \eqref{dim555}, taking first the liminf as $n\to\infty$ and then the limit as $\epsilon\to 0$, the thesis follows.
\end{proof}

For the rest of this section we need to require $p\leq q$. The main reason for this restriction is that only within this range of indices the Poincaré-Sobolev constant is a maxitive set functions.  This was already stated in \cite[Eq. (49)]{zuc} and proved only for $p=2$ in \cite[Proposition~2.6]{brafra} (by using a \emph{spin formula}, from which the maxitivity condition is not immediate to figure) and for $p=1$ in \cite[Lemma 3.2]{bra}. 

\begin{lemma}[Maxitivity of Poincar\'e-Sobolev constants]\label{max.PS}
Let $1<p\leq q<p^*$. If $A_1,A_2\in\mathcal A(\Omega)$ with $A_1\cap A_2=\emptyset$ then
\[
C_{p,q}(A_1\cup A_2)=\max\{C_{p,q}(A_1),C_{p,q}(A_2)\}.
\]
\end{lemma}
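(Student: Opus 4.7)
The plan is to prove the two inequalities separately. The direction $\geq$ is the easy one: since $C_{p,q}$ is monotone with respect to set inclusion (every admissible function for $C_{p,q}(A_i)$ extends by zero to an admissible function for $C_{p,q}(A_1\cup A_2)$ with identical Dirichlet and $L^q$ integrals), we immediately get $C_{p,q}(A_1\cup A_2)\geq C_{p,q}(A_i)$ for $i=1,2$, and therefore $C_{p,q}(A_1\cup A_2)\geq \max\{C_{p,q}(A_1),C_{p,q}(A_2)\}$.

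For the reverse inequality, I would take a maximizer $u\in W^{1,p}_0(A_1\cup A_2)$ (which exists by the direct method, as mentioned just after \eqref{PS}) and exploit the fact that $A_1$ and $A_2$ are disjoint open sets to split $u=u_1+u_2$, where $u_i:=u\,\mathbf 1_{A_i}\in W^{1,p}_0(A_i)$. Since $A_1\cap A_2=\emptyset$, the integrals decompose additively:
\[
\int_{A_1\cup A_2}|\nabla u|^p\,dx=\int_{A_1}|\nabla u_1|^p\,dx+\int_{A_2}|\nabla u_2|^p\,dx,\qquad \int_{A_1\cup A_2}|u|^q\,dx=\int_{A_1}|u_1|^q\,dx+\int_{A_2}|u_2|^q\,dx.
\]
Setting $a_i:=\int_{A_i}|u_i|^q\,dx$ and $b_i:=\int_{A_i}|\nabla u_i|^p\,dx$, the defining inequality $a_i^{p/q}\leq C_{p,q}(A_i)\,b_i$ provided by \eqref{PS} is available for each $i$.

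The core step, and the only place where the assumption $p\leq q$ enters, is the subadditivity estimate
\[
(a_1+a_2)^{p/q}\leq a_1^{p/q}+a_2^{p/q},
\]
which holds precisely because $p/q\in(0,1]$ makes $t\mapsto t^{p/q}$ concave with value $0$ at the origin. Chaining this with the two Poincar\'e–Sobolev bounds and then factoring out the maximum gives
\[
(a_1+a_2)^{p/q}\leq C_{p,q}(A_1)b_1+C_{p,q}(A_2)b_2\leq \max\{C_{p,q}(A_1),C_{p,q}(A_2)\}\,(b_1+b_2).
\]
Dividing by $b_1+b_2>0$ yields $C_{p,q}(A_1\cup A_2)\leq \max\{C_{p,q}(A_1),C_{p,q}(A_2)\}$, completing the proof.

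The main conceptual obstacle is the use of the decomposition $u=u_1+u_2$ in $W^{1,p}_0$ of disjoint open sets; this is standard but needs to be justified (e.g.\ by approximating $u$ with test functions in $C_c^\infty(A_1\cup A_2)$, each of which canonically splits into a sum of test functions supported in $A_1$ and $A_2$). The only genuinely restrictive hypothesis is $p\leq q$: if $p>q$, the exponent $p/q$ is greater than $1$ and the subadditivity inequality reverses, destroying the argument and in fact invalidating the maxitivity itself.
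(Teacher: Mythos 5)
Your proposal is correct and follows essentially the same route as the paper: both directions are handled identically, with the upper bound obtained by testing with the maximizer of $C_{p,q}(A_1\cup A_2)$, splitting the integrals over the disjoint sets, and invoking the subadditivity of $t\mapsto t^{p/q}$ for $p\leq q$ as the key step. The only cosmetic difference is that the paper passes through the mediant inequality $\frac{c_1+c_2}{d_1+d_2}\leq\max_j\frac{c_j}{d_j}$ before bounding each ratio by $C_{p,q}(A_j)$, whereas you apply the Poincar\'e--Sobolev bounds first and then factor out the maximum; the two chains are algebraically equivalent.
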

\begin{proof}
Let $u$ be such that 
\[
C_{p,q}(A_1\cup A_2)=\frac{\big(\int_{A_1\cup A_2} u(x)^q\, dx\big)^{p/q}}{\int_{A_1\cup A_2} |\nabla u(x)|^p\, dx}.
\]
By the subadditivity of the power $x^{p/q}$ at the numerator we obtain
\[
\begin{split}
C_{p,q}(A_1\!\cup\! A_2)\!\leq \!
\frac{(\!\int_{A_1} u^q\, dx)^{\frac pq}\!+\!(\!\int_{A_2} u^q\, dx)^{\frac pq}}{\int_{A_1} |\nabla u|^p\, dx\!+\!\int_{A_2} |\nabla u|^p\, dx}\!\leq\! \max_{j=1,2} \frac{(\int_{A_j} u^q\, dx)^{\frac pq}}{\int_{A_j} |\nabla u|^p\, dx}\!\leq\! \max_{j=1,2}C_{p,q}(A_j)
\end{split}
\]
where the last inequality follows from the admissibility of $u$ in the maximization of $C_{p,q}(A_j)$. The reverse inequality is a consequence of the monotonicity of $C_{p,q}$, see Proposition~\ref{prop.mon}. To prove it we test $C_{p,q}(A)$ with $u_j$  the maximizer of $C_{p,q}(A_j)$ for $j=1,2$ to obtain
\[
C_{p,q}(A_1\cup A_2)\geq \frac{\big(\int_{A_j} u_j(x)^q\, dx\big)^{\frac pq}}{\int_{A_j} |\nabla u_j(x)|^p\, dx} = C_{p,q}(A_j).
\]
From the arbitrariness of $j=1,2$ the reverse inequality follows. 
\end{proof}

Let us notice that the Poincaré-Sobolev constants in the \emph{subhomogeneous} regime do not fit into our framework. 
Indeed, when $p>q$ the local maxitivity condition is no longer true. For instance, when $p=2$ and $q=1$, $C_{2,1}$ is the \emph{torsional rigidity} \eqref{torsion}, which as already observed, it is additive (not maxitive) and therefore it fails to satisfy the local maxitivity condition \eqref{eq.wm}.

\begin{corollary}[Best Poincar\'e-Sobolev constants]
Let $1<p\leq q<p^*$ be fixed. The minimization problem 
\begin{equation*}
\min\{C_{p,q}(\Omega\setminus\Sigma): \Sigma\in\mathcal C(\overline \Omega)\text{ connected with } \haus(\Sigma)=L\}
\end{equation*}
has a solution and every minimizer is Ahlfors regular.
\end{corollary}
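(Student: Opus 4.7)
The plan is to check the three hypotheses of Theorems~\ref{teo:ex} and \ref{teo:reg} for the set function $F=C_{p,q}$ and then invoke them. Lower semicontinuity \eqref{eq.ls} is precisely Lemma~\ref{ls.PS}, so condition (i) is free. For the local maxitivity \eqref{eq.wm} I would apply Proposition~\ref{mimplywm}: its maxitivity requirement is Lemma~\ref{max.PS}, strict positivity on balls is immediate because any nonzero $\phi\in C_c^\infty(B_r)$ makes the ratio in \eqref{PS} finite and positive, and the shrinking-ball limit \eqref{limit} follows from the elementary scaling identity
\[
C_{p,q}(B_r)=r^{\alpha}\, C_{p,q}(B_1),\qquad \alpha=\frac{2p}{q}+p-2.
\]
The assumption $q<p^*$ is equivalent to $p(q+2)>2q$, i.e.\ $\alpha>0$, so $C_{p,q}(B_r)\to 0$ as $r\to 0^+$; monotonicity (Proposition~\ref{prop.mon}) then forces $C_{p,q}(B_r^{\Omega})\to 0$ as well.

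For the strict monotonicity on enlargements \eqref{eq.sm} I would combine the maxitivity of $C_{p,q}$ with a strong maximum principle applied on a single connected component. Fix an enlargement $\Sigma'$ of $\Sigma$, set $A=\Omega\setminus\Sigma$ and $A'=\Omega\setminus\Sigma'$, and by Lemma~\ref{max.PS} choose a connected component $A'_0$ of $A'$ with $C_{p,q}(A')=C_{p,q}(A'_0)$. The shrinking condition in the definition of enlargement provides a connected component $A_0$ of $A$ with $A'_0\subsetneq A_0$. Any extremal of \eqref{PS} on $A_0$ satisfies the associated Euler--Lagrange equation, a $p$-Laplace type nonlinear eigenvalue problem with zero Dirichlet conditions on $\partial A_0$, and is therefore strictly positive throughout $A_0$. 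Consequently, the extremal of $C_{p,q}(A'_0)$ extended by zero vanishes on the nonempty open set $A_0\setminus\overline{A'_0}$ and cannot realise $C_{p,q}(A_0)$; combining with Lemma~\ref{max.PS} once more this forces $C_{p,q}(A')=C_{p,q}(A'_0)<C_{p,q}(A_0)\le C_{p,q}(A)$, which is \eqref{eq.sm}.

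The main obstacle I expect is precisely this last step: one must invoke strict positivity of the extremals in the quasilinear regime $p\neq 2$, which is not fully elementary but is a well-established consequence of the Harnack inequality and the V\'azquez/Tolksdorf strong maximum principle for the $p$-Laplacian. All other items are immediate assemblies of the lemmas already proved in Section~\ref{sec.app}, and once \eqref{eq.ls}, \eqref{eq.sm} and \eqref{eq.wm} are verified the existence of a minimizer and its Ahlfors regularity follow from a direct appeal to Theorems~\ref{teo:ex} and \ref{teo:reg} (using Remark~\ref{relax} to restrict the verification to open sets of the form $\Omega\setminus\Sigma$).
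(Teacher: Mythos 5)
Your verification of (i) and (iii) matches the paper exactly: lower semicontinuity is Lemma~\ref{ls.PS}, and local maxitivity comes from Proposition~\ref{mimplywm} via Lemma~\ref{max.PS}, positivity on balls, and the scaling identity $C_{p,q}(B_r)=r^{2p/q+p-2}C_{p,q}(B_1)$ with $q<p^*$ giving a positive exponent. The problem is in your argument for the strict monotonicity \eqref{eq.sm}. You deduce from $A_0'\subsetneq A_0$ that the extremal of $C_{p,q}(A_0')$ extended by zero ``vanishes on the nonempty open set $A_0\setminus\overline{A_0'}$,'' but that set can perfectly well be empty: enlargements are obtained by adding one-dimensional pieces (the construction of Lemma~\ref{lemma2} adds segments), so typically $\Sigma'\setminus\Sigma$ has empty interior and $\overline{A_0'}=\overline{A_0}$. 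In exactly the relevant situation your contradiction evaporates, since the extended extremal vanishes only on a set of Lebesgue measure zero. (A secondary, repairable point: Lemma~\ref{max.PS} is stated for two components, whereas $\Omega\setminus\Sigma'$ may have countably many; you need the $\sigma$-maxitive version together with the fact that the supremum over components is attained, which follows because an extremal for $C_{p,q}(A')$ exists and must concentrate on a single component.)

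The step that actually closes the argument — and the one the paper uses — is capacitary rather than topological: each component of $\Omega\setminus\Sigma$ is shrunk by a set of positive $\mathcal H^1$-measure, hence of positive $p$-capacity for $p>1$ in the plane. A function in $W_0^{1,p}(A_0')$ has a quasi-continuous representative vanishing quasi-everywhere on $A_0\cap(\Sigma'\setminus\Sigma)$, while any extremal of $C_{p,q}(A_0)$ is strictly positive throughout $A_0$ (here your appeal to the Harnack inequality and the strong maximum principle for the $p$-Laplacian is exactly right); since the set where they must disagree has positive capacity, $W_0^{1,p}(A_0')\subsetneq W_0^{1,p}(A_0)$ in a way that forces $C_{p,q}(A_0')<C_{p,q}(A_0)$. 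With ``nonempty open set'' replaced by ``set of positive $p$-capacity'' your proof of (ii) becomes correct, and the rest of the proposal is a faithful reproduction of the paper's argument.
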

\begin{proof}
Apply Theorems~\ref{teo:ex} and \ref{teo:reg} to $C_{p,q}$.
\begin{itemize}
\item[(i)] The lower semicontinuity has been provided in Lemma~\ref{ls.PS}.
\item[(ii)] Let $\Sigma'$ be the enlargement of $\Sigma$ given by Lemma~\ref{lemma2}. Each connected component of $\Omega\setminus \Sigma$ has been shrunk by a set of positive one dimensional Hausdorff measure, thus by a set of positive $p$-capacity. By strict monotonicity of the $C_{p,q}$ with respect to set inclusion $C_{p,q}(\Omega\setminus \Sigma')<C_{p,q}(\Omega\setminus \Sigma)$ and \eqref{eq.sm} follows.
\item[(iii)] The local maxitivity of $C_{p,q}$ follows from Proposition~\ref{mimplywm}. Indeed, the lower semicontinuity has been already proved in (i) while the maxitivity of $C_{p,q}$ is provided in Lemma~\ref{max.PS}. Moreover, $C_{p,q}(B_r)>0$ and by homogeneity and translation invariance we have $$C_{p,q}(B_r^\Omega(x))\leq C_{p,q}(B_r(x))= {C_{p,q}((B_1(0))}r^{\frac{2p}{q}+p-2}.$$ The assumption $q<p^*$ is equivalent to ${2p}/{q}+p-2>0$ and then $C_{p,q}$ satisfies \eqref{limit}.
\end{itemize}
The corollary is proved. 
\end{proof}

The case $p=1$ is more nuanced, even in the simpler situation where $q=1$. Indeed, when $q=1$ (and thus also $p=1$), it corresponds to the (reciprocal) \emph{Cheeger constant} (see \cite{kawfri}), for which it remains unclear whether or not the strict monotonicity property \eqref{eq.sm} holds. Due to its notably importance, answering this open question  could be very interesting. 

We conclude by noticing that Poincaré-Sobolev constants serves also as a bridge between more physical quantities and purely geometric ones. Indeed, when $p=q\to+\infty$, the Poincaré-Sobolev constant reduces to the inradius of a set (see \cite{julima} for more information), leading to the class of maximum distance problems \eqref{mdp} studied in Corollary~\ref{cor.mdp}.

\section*{Acknowledgments} 
An anonymous referee who carefully read this paper and suggested several important improvements is kindly acknowledged. The author is partially supported by the PRIN 2022 project 2022R537CS \emph{$NO^3$ - Nodal Optimization, NOnlinear elliptic equations, NOnlocal geometric problems, with a focus on regularity}, founded by the European Union - Next Generation EU; and the GNAMPA 2023 project \emph{Teoria della regolarità per problemi ellittici e parabolici con diffusione anisotropa e pesata}.

\end{document}